\documentclass[dvipsnames,11pt,reqno,twoside,final]{amsart}
\usepackage[a4paper,left=3cm,right=3cm,top=3cm,bottom=3cm]{geometry}
\usepackage[T1]{fontenc}
\usepackage[utf8]{inputenc}
\usepackage{xcolor}
\usepackage{amssymb,mathtools}  
\usepackage{dsfont}
\usepackage{enumitem}
\usepackage{subcaption}
\usepackage{booktabs}

\usepackage{preamble/mhequ}

\usepackage{amsthm}

\newcounter{counterEnvDefault}
\numberwithin{counterEnvDefault}{section}

\theoremstyle{plain}

\newtheorem{lemma}[counterEnvDefault]{Lemma}
\newtheorem*{lemma*}{Lemma}

\newtheorem{theorem}[counterEnvDefault]{Theorem}
\newtheorem*{theorem*}{Theorem}

\newtheorem{proposition}[counterEnvDefault]{Proposition}
\newtheorem*{proposition*}{Proposition}

\newtheorem*{corollary*}{Corollary}

\newtheorem*{assumption*}{Assumption}

\theoremstyle{definition}

\newtheorem*{exercise*}{Exercise}

\newtheorem{definition}[counterEnvDefault]{Definition}
\newtheorem*{definition*}{Definition}

\newtheorem*{notation*}{Notation}

\newtheorem{remark}[counterEnvDefault]{Remark}
\newtheorem*{remark*}{Remark}

\newtheorem*{claim*}{Claim}

\newtheorem*{assertion*}{Assertion}

\usepackage{microtype}
\renewcommand{\phi}{\varphi}
\renewcommand{\epsilon}{\varepsilon}

\usepackage{hyperref}
\definecolor{colorlinks}{RGB}{0, 24, 168}
\definecolor{colorcites}{RGB}{124, 10, 2}
\hypersetup{
    colorlinks=true,
    linkcolor=colorlinks,
    citecolor=colorcites,
    urlcolor=colorlinks,
    pdfborder={0 0 0}
}

\newcommand\blank{\,\cdot\,}

\newcommand\XY{{\operatorname{XY}}}
\newcommand\Height{{\operatorname{Height}}}

\newcommand\ind[1]{\mathds{1}_{#1}}
\newcommand\true[1]{\mathds{1}[{#1}]}

\newcommand\diff{{\mathrm{d}}}

\newcommand\C{\mathbb C}

\renewcommand\H{\mathbb H}

\newcommand\R{\mathbb R}
\renewcommand\S{\mathbb S}

\newcommand\Z{\mathbb Z}

\newcommand\calA{\mathcal A}

\newcommand\calC{\mathcal C}

\newcommand\calE{\mathcal E}

\newcommand\calP{\mathcal P}

\newcommand\calS{\mathcal S}

\newcommand\frakm{\mathfrak m}

\usepackage[
    style=alphabetic,
    date=year,
    maxnames=3,
    maxbibnames=99
]{biblatex}
\AtEveryBibitem{%
    \clearfield{url}%
    \clearfield{urldate}%
    \clearfield{urlday}%
    \clearfield{urlmonth}%
    \clearfield{urlyear}%
    \clearfield{doi}%
    \clearfield{isbn}%
    \clearfield{issn}%
}

\makeatletter
\@namedef{subjclassname@2020}{\textup{2020} Mathematics Subject Classification}
\makeatother

\title{The BKT transition and surface tension differentiability}
\subjclass[2020]{Primary 82B20, 82B26, 82B41}

\author{Thibault Durand}
\address{ENS Ulm, Sorbonne Université, LPSM}
\email{thibault.durand@ens.psl.eu}

\author{Piet Lammers}
\address{CNRS, Sorbonne Université, LPSM}
\email{piet.lammers@cnrs.fr}

\date{27 May 2026}
\keywords{Berezinskii--Kosterlitz--Thouless transition, XY model, height functions, mass,
correlation length, surface tension}

\begin{document}
\newcommand\BC{\mathbf{C}}
\newcommand\BP{\mathbf{P}}
\newcommand\BM{\mathbf{M}}
\newcommand\crev{c_{\mathrm{rev}}}
\newcommand\cfav{c_{\mathrm{fav}}}

\begin{abstract}
    Under the duality between the two-dimensional XY model and an integer-valued height
    function, the BKT transition is expected to correspond to the disappearance of a
    corner in the surface tension; in the delocalised phase, its zero-slope curvature
    should determine the Gaussian free field prefactor.
    We prove that the XY mass equals the right derivative at zero slope of the dual
    height function's free energy, with a uniform quadratic error bound near the origin.
    Thus the massive phase is exactly the corner regime, while in the BKT phase the
    surface tension is quadratically bounded at zero slope.
    The proof combines Kadanoff--Ceva duality, Ginibre's inequality, and a pushing lemma
    (an estimate of
    Russo--Seymour--Welsh type) for the cable height function.
\end{abstract}

\maketitle

\setcounter{tocdepth}{1}
\tableofcontents

\section{Main results and proof strategy}

\subsection{Formal statement of the main result}

\begin{definition}[XY model]
    \label{def:xy_model}
    Consider a finite simple graph $G=(V,E)$ with edge weights
    $J=(J_{uv})_{uv\in E}\subset[0,\infty)$.
    Let $\Omega=\S^V$, where $\S\subset\C$ denotes the unit circle, and endow
    $\Omega$ with the product Haar probability measure $\diff\sigma$.
    The \emph{XY model} on $G$ with \emph{coupling constants} $J$ is the Gibbs measure
    $\langle\blank\rangle_{G,J}$ on $\Omega$ defined by the formula
    \begin{equation}
        \label{eq:XY_definition}
        \langle X\rangle_{G,J}:=\frac1{Z_{G,J}}\int_\Omega X(\sigma)
        e^{\frac12\sum_{uv\in\vec E}J_{uv}\sigma_u\bar\sigma_v} \diff\sigma,
    \end{equation}
    where $\vec E$ is the set of directed edges, and $Z_{G,J}\in(0,\infty)$ is the
    \emph{partition function}.
    Often $J$ is taken to be a constant $\beta\in[0,\infty)$, in which case we write
    $\langle\blank\rangle_{G,\beta}$.
    The parameter $\beta$ is called the \emph{inverse temperature}.

    We are interested in the \emph{mass} (the inverse of the \emph{correlation length})
    of the XY model in 2D.
    Write $\Lambda_n$ for the nearest-neighbour graph on $[-n,n]^2\cap\Z^2$.
    Then the \emph{mass} $\frakm_\XY(\beta)$ is defined via
    \begin{equation}
        \label{eq:XY_mass_definition}
        \frakm_\XY(\beta):=\lim_{k\to\infty}-\frac1k\log\lim_{n\to\infty}\langle\sigma_0\bar\sigma_{k
        e_1}\rangle_{\Lambda_n,\beta}\in[0,\infty].
    \end{equation}
\end{definition}

For an introduction to the XY model,
see~\cite{PeledSpinka_2019_LecturesSpinLoop,FriedliVelenik_2017_StatisticalMechanicsLattice}.
The mass of the 2D XY model characterises the exponential decay rate of the two-point
correlation function.
By the Ginibre inequality~\cite{Ginibre_1970_GeneralFormulationGriffiths}, it is a
non-increasing function of $\beta$.
The \emph{Berezinskii--Kosterlitz--Thouless (BKT) transition point}
$\beta_{\mathrm{BKT}}$ is the threshold at which this mass vanishes and the decay of the
two-point function changes from exponential to polynomial.
Fröhlich and
Spencer~\cite{FrohlichSpencer_1981_KosterlitzThoulessTransitionTwodimensional} proved
polynomial decay for sufficiently large $\beta$, showing in particular that
$\beta_{\mathrm{BKT}}\in(0,\infty)$.
See Section~\ref{section:background} for more context.

Recent work has analysed the BKT transition from several complementary
perspectives~\cite{EngelenburgLis_2023_ElementaryProofPhase,AizenmanHarelPeled_2022_DepinningIntegerrestrictedGaussian,GarbanSepulveda_2023_QuantitativeBoundsVortex,Lammers_2023_BijectingBKTTransition}.
Several of these works use the dual height function, which we now introduce.
It is defined on the \emph{dual graph} of $\Lambda_n$,
namely the graph $\Lambda_n^*$ whose vertices are centres of $1\times 1$ squares of $\Z^2$
that share at least one edge with $[-n,n]^2$,
and whose edges connect nearest-neighbour vertices.
Its \emph{boundary} is defined as $\partial\Lambda_n^*:=\Lambda_n^*\setminus[-n,n]^2$.
See Figure~\ref{fig:dual_graph}, \textsc{Left} for an example.

\begin{figure}
    \centering
    \includegraphics{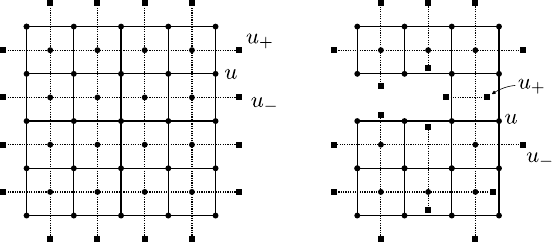}
    \caption{\textsc{Left}: The primal graph $\Lambda_2$ (solid lines) and its dual graph
        $\Lambda_2^*$ (dashed lines).
        The vertices in $\partial\Lambda_2^*$ are marked with squares rather than circles.
        \textsc{Right}: A simply connected continuum domain $\Gamma$ (defined in
        Section~\ref{sec:the_cable_system})
        and its associated primal graph.
        The edge lengths in $\Gamma$ do not affect the primal graph, but they do affect the
    weights $J$ on the edges of the primal graph.}
    \label{fig:dual_graph}
\end{figure}

\begin{definition}[Height function]\label{def:dual_height_function}
    Consider a finite simple graph $G=(V,E)$ with edge weights
    $J=(J_{uv})_{uv\in E}\subset[0,\infty)$.
    Let $\emptyset\neq\partial G\subset V$ be a distinguished subset of vertices,
    called the \emph{boundary}, and let $\zeta\in\Z^{\partial G}$ be a
    \emph{boundary height function}.
    The \emph{height function} on $G$ with coupling constants $J$ and boundary conditions
    $\zeta$ is the Gibbs measure $\mu_{G,J}^\zeta$ on $\Z^V$ defined by the formula
    \begin{multline}
        \label{eq:height_function_definition}
        \mu_{G,J}^\zeta[\{h\}] := \frac1{Z_{G,J}^\zeta}
        \true{h|_{\partial G}=\zeta} \prod_{uv\in E} w_{J_{uv}}(h_u-h_v);
        \\
        w_{J_{uv}}(k) := \sum_{i,j\in\Z_{\geq 0},\,i-j=k} \frac{(J_{uv}/2)^i}{i!}
        \frac{(J_{uv}/2)^j}{j!}.
    \end{multline}
    Here $Z_{G,J}^\zeta$ is the \emph{partition function}.
    We write $\mu_{G,\beta}^\zeta$ when $J\equiv\beta\in[0,\infty)$ is constant.
\end{definition}

We are particularly interested in the family $(\mu_{\Lambda_n^*,\beta}^0)_n$.
Let $F_u$ denote the face to the north-east of some $u\in\Z^2$,
and define
\begin{equation}
    M_n:\Z^2\times\Z^2\to[0,\infty),\qquad
    (u,v)\mapsto\mu_{\Lambda_n^*,\beta}^0[h_{F_u}h_{F_v}],
\end{equation}
with the convention that $M_n(u,v)=0$ when one of the faces is outside $\Lambda_n^*$.
The \emph{mass} of the height function is defined as
\begin{equation}
    \label{eq:Heights_mass_definition}
    \frakm_\Height(\beta):=\lim_{k\to\infty}-\frac1k\log\lim_{n\to\infty}
    (1\wedge M_n(0,ke_1))
    \in[0,\infty].
\end{equation}

The relationship between the two models is partially understood. We shall use the
following known connections as motivation and context.
\begin{itemize}
    \item If $\frakm_\XY(\beta)=0$, then the two-point function has a polynomial lower
        bound; together with the McBryan--Spencer upper bound this gives polynomial
        decay~\cite{McBryanSpencer_1977_DecayCorrelationsSOnsymmetric,EngelenburgLis_2023_ElementaryProofPhase}.
    \item If $\frakm_\Height(\beta)=0$, then $M_n(0,0)$ grows at least logarithmically in
        $n$~\cite{AizenmanHarelPeled_2022_DepinningIntegerrestrictedGaussian,Lammers_2023_DichotomyTheoryHeight}.
    \item For any $\beta\in[0,\infty)$, we have
        $2\frakm_\XY(\beta)=\frakm_\Height(\beta)$~\cite{Lammers_2023_BijectingBKTTransition}.
\end{itemize}
The purpose of the present work is to connect these quantities with a third one:
the \emph{free energy} of the height function model with tilted boundary conditions.
In the definition below, we identify any $s\in\R$ with the boundary condition on
$\partial\Lambda_n^*$
defined via
\begin{equation}
    \partial\Lambda_n^*\to\Z,\,x\mapsto \lfloor s x_1\rfloor.
\end{equation}
This parameter is called the \emph{slope}.

\begin{definition}[Free energy]
    \label{def:free_energy}
    For fixed $\beta\in(0,\infty)$, the free energy is a function $f_\beta:\R\to\R$ defined by
    \begin{equation}
        \label{eq:free_energy}
        f_\beta(s) := \lim_{n\to\infty}-\frac{1}{|\Lambda_n^*|}\log Z_{\Lambda_n^*,\beta}^s.
    \end{equation}
    With this normalisation, $f_\beta$ is symmetric and strictly
    convex~\cite{Sheffield_2005_RandomSurfaces}.
    We write $f'_\beta$ for its \emph{right} derivative at points where the distinction
    matters, and $f''_\beta$ for its second derivative when it exists.
    If $f'_\beta(0)>0$, then the free energy is not differentiable at $0$ and we say that
    the free energy has a \emph{corner} at $0$.
\end{definition}

\begin{theorem}[The XY model's mass as the free energy's derivative]
    \label{thm:Main}
    Consider the two-dimensional XY and height models introduced above.
    There exists a universal constant $c>0$ such that, for any
    $\beta\in(0,\infty)$ and $s\in(0,1/1000)$,
    \begin{equation}
        \label{eq:main}
        s\cdot\frakm_\XY(\beta)\leq
        f_\beta(s)-f_\beta(0)\leq s\cdot\frakm_\XY(\beta) + c s^2/2.
    \end{equation}

    In particular, the theorem gives the following consequences.
    \begin{itemize}
        \item We have $2\frakm_\XY(\beta)=\frakm_\Height(\beta)=2f'_\beta(0)$ for all
            $\beta\in(0,\infty)$.
        \item If $\frakm_\XY(\beta)=0$, then for small $s>0$ we have
            \begin{equation}
                f_\beta(s)-f_\beta(0)\leq c s^2/2.
            \end{equation}
            If $f''_\beta(0)$ is well-defined, then $f''_\beta(0)\leq c$.
    \end{itemize}
\end{theorem}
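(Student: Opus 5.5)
The plan is to express the ratio $Z^s_{\Lambda_n^*,\beta}/Z^0_{\Lambda_n^*,\beta}$ through XY correlations via Kadanoff--Ceva duality, and then prove matching upper and lower bounds on this ratio.

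\emph{Duality.} Imposing the tilted boundary condition $x\mapsto\lfloor sx_1\rfloor$ on the height function means imposing a prescribed jump of one unit across each of the roughly $s\cdot 2n$ vertical boundary lines where $\lfloor sx_1\rfloor$ increments. Under Kadanoff--Ceva duality, a height jump of one across a dual path corresponds to inserting a pair of disorder operators, equivalently an XY observable $\sigma_a\bar\sigma_b$, where $a$ and $b$ are the primal boundary points at the two ends of the path (top and bottom of the box). Equivalently, one wires the top and bottom boundaries and inserts charges. Thus
\begin{equation*}
Z^s_{\Lambda_n^*,\beta}/Z^0_{\Lambda_n^*,\beta}=\Big\langle\prod_{i=1}^{m}\sigma_{a_i}\bar\sigma_{b_i}\Big\rangle_{\Lambda_n,\beta,\mathrm{wired}},
\end{equation*}
with $m\approx 2sn$ pairs at horizontal spacing about $1/s$, each pair at vertical distance about $2n$. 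I would first establish this identity carefully, with the correct boundary wiring, using the representation \eqref{eq:height_function_definition} as the expansion of $e^{J\cos}$ in modified Bessel weights.

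\emph{Lower bound on $f_\beta(s)-f_\beta(0)$.} This requires an upper bound on the correlation. By Ginibre's inequality, the correlation is bounded by the product of the individual pair correlations, provided the boundary can be decoupled monotonically. Ginibre gives monotonicity in $J$, so cutting edges decreases $\langle\prod\rangle$ only when the observables are separated. The upper bound should instead come from the subadditivity and definition of the mass: each pair spans distance $2n$, and $\langle\sigma_a\bar\sigma_b\rangle\le e^{-\frakm_\XY\cdot 2n}$ by the standard fact that $-\log\langle\sigma_0\bar\sigma_x\rangle$ is subadditive, so the mass is an infimum. Bounding the product correlation by the product of pair correlations, which is Ginibre applied to $\sigma_{a_i}\bar\sigma_{b_i}$ products, gives $\log$ ratio $\le -2sn\cdot 2n\cdot\frakm_\XY$. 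Dividing by $|\Lambda_n^*|\approx 4n^2$ yields $f_\beta(s)-f_\beta(0)\ge s\frakm_\XY$.

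\emph{Upper bound.} This requires a lower bound on the correlation of $m$ pairs. By Ginibre (or GKS-type positivity), the correlation is at least the product of the correlations of the pairs after restricting each pair to its own vertical strip of width $1/s$: setting the couplings between strips to zero decreases correlations of even observables. In each strip of width $w=1/s$ and height $2n$, one needs $\langle\sigma_a\bar\sigma_b\rangle_{\text{strip}}\ge e^{-2n(\frakm_\XY+c/(2w))}$. This is the main obstacle, since strip confinement costs something extra compared with the full plane. I would pass to the dual cable height function in the strip and use the pushing (RSW-type) lemma. The aim is to show that the one-unit height jump costs, per unit length, at most $\frakm_\XY+O(1/w)$. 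Concretely, the lemma should allow concatenating crossing events of $w\times w$ blocks at a multiplicative cost of $e^{-O(1)}$ per block relative to the full-plane two-point function. Over $2n/w$ blocks this produces the factor $e^{-O(n s)}$ per pair. Summing over the $2sn$ pairs, the total error is $O(s^2n^2)$, which after normalisation gives $cs^2/2$. The restriction $s<1/1000$ ensures that the strips are wide enough for the pushing lemma to apply.

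\emph{Corollaries.} The consequences then follow. Dividing by $s$ and letting $s\downarrow0$ gives $f'_\beta(0)=\frakm_\XY$, and the cited identity $2\frakm_\XY=\frakm_\Height$ completes the first bullet. When $\frakm_\XY=0$, the quadratic bound is immediate, and combined with $f_\beta(0)$ being a minimum by symmetry and convexity, it gives $f''_\beta(0)\le c$.
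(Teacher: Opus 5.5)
Your overall architecture matches the paper's: Kadanoff--Ceva, then a product bound over pair correlations for the lower bound, and strip decoupling plus the pushing lemma for the upper bound. However, two key steps are not justified.

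\textbf{Lower bound.} Ginibre's inequality does not give $\langle\prod_i\sigma_{a_i}\bar\sigma_{b_i}\rangle\le\prod_i\langle\sigma_{a_i}\bar\sigma_{b_i}\rangle$. It compares a correlation with itself under increased couplings, or bounds $\langle\cos(\mathbf m\cdot\theta)\cos(\mathbf n\cdot\theta)\rangle$ \emph{from below} by $\langle\cos(\mathbf m\cdot\theta)\rangle\langle\cos(\mathbf n\cdot\theta)\rangle$. It never bounds a joint correlation from above by a product. Moreover, since it holds on every graph, it cannot imply an inequality that fails in general, and this one does. On the path $b_2a_1b_1a_2$ with couplings $K,\epsilon,K$, the four-point function equals $r(K)^2$, while the product of the pair correlations is $r(K)^2r(\epsilon)^2$, where $r=I_1/I_0$. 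A proof must therefore use the planar arrangement of the charges (all $\bar\sigma$ on the top side, all $\sigma$ on the bottom). The paper does this on the height side. FKG for the log-concave weights $w_\beta$, on configurations whose boundary values on $\{\zeta>0\}$ and on $\{\zeta<0\}$ are each either $0$ or $\zeta$, gives Equation~\eqref{eq:boundary_fkg}. Iterating it over the level sets of $x\mapsto\lfloor sx_1\rfloor$ bounds $Z^s/Z^0$ by a product of single-step ratios. Each of these is one two-point function $\langle\bar\sigma_u\sigma_{u-2ne_2}\rangle_{\Lambda_n,\beta}$ by Lemma~\ref{lem:kadanoff_ceva}, and only then is Ginibre used, to pass to $\Z^2$. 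Also, the correct dual is the \emph{free} XY measure on $\Lambda_n$, as in \eqref{eq:kadanoff_ceva_slope}. Wiring the top row would merge the $a_i$ into a single charge-$m$ insertion.

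\textbf{Upper bound.} Your decoupling step is sound. Ginibre makes $\langle\cos(\mathbf m\cdot\theta)\rangle$ nondecreasing in every coupling for every integer vector $\mathbf m$, so it even yields \eqref{eq:multi_point_ginibre} with $f=1$. The paper settles for the harmless loss $c^{n^2s^2}$ via Section~\ref{sec:the_technical_step}. The gap is the strip estimate, which is exactly Theorem~\ref{thm:reverse}; your sketch lacks the two ingredients that prove it.

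First, you need a mechanism turning a crossing probability into a ratio of two-point functions. Write $\Xi(\Gamma)=Z^{01}_{\Gamma,\beta}/Z^{00}_{\Gamma,\beta}$, randomise $a\in\{0,1\}$ under $\nu_\Gamma$, and explore $\calE_0(\H^-_r)$. By the Markov property, given the explored set $\calA$, the odds of $a=1$ against $a=0$ equal $\Xi(D(\Gamma\setminus\calA))$. By two-point Ginibre this is at most $\Xi(D(\Gamma\cap\H^+_r))$ when $\calA\subset\H^-_{-1}$. Averaging gives $\Xi(D(\Gamma\cap\H^+_r))/\Xi(\Gamma)\ge\tilde\mu^{01}_{\Gamma,\beta}[\calE_0(\H^-_r)\subset\H^-_{-1}]$ (Lemma~\ref{lem:exploration_bound_ratio_bound}). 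The pushing lemma then bounds this probability below by $c^{n/r}$.

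Second, you need the right reference quantity. The comparison is with the boundary-to-boundary two-point function in $\Lambda_{2n,n}$, and its decay $e^{-2n\frakm_\XY(1+o(1))}$ is a separate input \cite[Claim~8]{Lammers_2023_BijectingBKTTransition}. Your ``cost relative to the full-plane two-point function'' skips this boundary-versus-bulk step. Ginibre cannot supply it, because it bounds strip or box quantities by the full-plane one only from above.
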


These results are reminiscent of analogous statements for the six-vertex model
(a height function model)
and the random-cluster model (a percolation model).
The two models are linked via the Baxter--Kelland--Wu
coupling~\cite{BaxterKellandWu_1976_EquivalencePottsModel}.
In the six-vertex model, derivatives of the free energy can be analysed via the Bethe Ansatz.
In~\cite{Duminil-CopinKozlowskiKrachun_2022_SixvertexModelsFree},
it is proved that the free energy is twice differentiable at slope $0$ or has a corner at
slope $0$ depending on the parameters.
In the former case, the second derivative can be computed explicitly,
which determines (in part of the phase diagram) the scaling of the Gaussian free field in
the scaling limit,
and certain critical exponents in the associated random-cluster model,
see~\cite{Duminil-CopinKajetanKozlowskiLammers_2026_GaussianFreeField} and the references therein.
When the free energy has a corner, the mass of the random-cluster model
(associated with the right derivative of the free energy) is positive and calculated
explicitly in~\cite{Duminil-CopinGagnebinHarel_2021_DiscontinuityPhaseTransition}.
In that integrable setting, these results give a more explicit picture of the relation
between the free energy and the associated mass.

\subsection{Proof strategy}

\subsubsection*{Overview of the tools}

It has been known since
Kramers--Wannier~\cite{KramersWannier_1941_StatisticsTwodimensionalFerromagnet_I,KramersWannier_1941_StatisticsTwodimensionalFerromagnet_II}
that
the planar Ising model has a dual model that is also an Ising model.
For example, the models share the same partition function (up to trivial correction factors).
They observed that the dual temperature is the inverse of the primal temperature,
which suggests that the critical point coincides with the self-dual temperature.
Kadanoff--Ceva~\cite{KadanoffCeva_1971_DeterminationOperatorAlgebra} argued
that the correspondence extends from partition functions to certain correlation functions.
More precisely, inserting correlation functions on one side of the duality corresponds to
perturbing the partition function on the other side of the duality via so-called
\emph{disorder operators}.
In our very specific context, the correlation functions measure spins at vertices on the
outer boundary of the planar graph (or, equivalently, on the vertices of a single face,
designated as the \emph{outer face}).
In that case, the disorder operators can be interpreted as a change in the boundary
condition of the height function on the dual graph.
This boundary interpretation lets us use tools on both sides of the duality:
\begin{itemize}
    \item On the XY side, we have the Ginibre
        inequality~\cite{Ginibre_1970_GeneralFormulationGriffiths} and the
        Brydges--Fröhlich--Spencer (BFS)
        representation~\cite{BrydgesFrohlichSpencer_1982_RandomWalkRepresentation},
    \item On the height function side, the system can be analysed via a percolation
        representation of the height
        function~\cite{Sheffield_2005_RandomSurfaces,Lammers_2023_DichotomyTheoryHeight}
        using the Fortuin--Kasteleyn--Ginibre (FKG)
        inequality~\cite{FortuinKasteleynGinibre_1971_CorrelationInequalitiesPartially}
        for the absolute value of the height
        function~\cite{LammersOtt_2024_DelocalisationAbsolutevalueFKGSolidonsolid},
        and in particular the so-called \emph{pushing
        lemma}~\cite{Duminil-CopinSidoraviciusTassion_2017_ContinuityPhaseTransition,Lammers_2023_DichotomyTheoryHeight}.
\end{itemize}

\subsubsection*{Global organisation}

The proof is organised as follows.
Section~\ref{sec:kadanoff_ceva} recalls the Kadanoff--Ceva correspondence in the
generality needed here.
Section~\ref{sec:lower_bound} proves the lower bound in Equation~\eqref{eq:main}.
The argument uses the FKG inequality and the Kadanoff--Ceva correspondence to
bound a ratio of partition functions
in terms of a product over two-point correlation functions, and then uses the Ginibre
inequality to relate this to the mass of the XY model.
Sections~\ref{sec:upper_bound_overview}--\ref{sec:the_technical_step} are devoted to the
upper bound in Equation~\eqref{eq:main}, which requires more work.

Section~\ref{sec:upper_bound_overview} contains an overview of the proof.
It contains two gaps that are filled in in the subsequent sections.
The first gap can be thought of as a reverse Ginibre inequality (stated below).
This is the main input for the upper bound.
The second gap is a technical step;
its proof is similar to the proof of the reverse Ginibre inequality, but
its role in the argument is narrower.
Section~\ref{sec:the_cable_system} introduces the \emph{cable system},
which provides a framework for the proof of Sections~\ref{sec:reverseinequality}
and~\ref{sec:the_technical_step}.
Proofs of the reverse Ginibre inequality and the technical step are given in
Sections~\ref{sec:reverseinequality} and~\ref{sec:the_technical_step} respectively.

\subsubsection*{Reverse Ginibre inequality}

We now explain the main input for the upper bound
(Sections~\ref{sec:upper_bound_overview}--\ref{sec:the_technical_step}).
The following inequality is used in its proof.
Let $\Lambda_{m,n}$ denote the nearest-neighbour graph on $([-m,m]\times[-n,n])\cap\Z^2$.

\begin{theorem}[Reverse Ginibre inequality]
    \label{thm:reverse}
    There exists a universal constant $\crev>0$ with the following properties.
    Fix $\beta\in[0,\infty)$, fix $n\in\Z_{\geq 1}$, and fix $r\in\Z_{\geq 2}$ with $r\leq 2n$.
    Let $u=(0,n)$ and $v=(0,-n)$.
    Then
    \begin{equation}
        \langle \sigma_u\bar\sigma_v\rangle_{\Lambda_{r,n},\beta}
        \geq
        \crev^{n/r}
        \langle \sigma_u\bar\sigma_v\rangle_{\Lambda_{2n,n},\beta}.
    \end{equation}
\end{theorem}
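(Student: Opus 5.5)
We plan to pass to the dual height function via the Kadanoff--Ceva correspondence and control the ratio of two partition functions with the pushing lemma. Since $u$ and $v$ lie on the outer face of $\Lambda_{r,n}$, the Kadanoff--Ceva correspondence (Section~\ref{sec:kadanoff_ceva}) gives
\begin{equation*}
    \langle\sigma_u\bar\sigma_v\rangle_{\Lambda_{r,n},\beta}
    =Z^{\zeta_{uv}}_{\Lambda_{r,n}^*,\beta}/Z^{0}_{\Lambda_{r,n}^*,\beta}.
\end{equation*}
Here $\zeta_{uv}$ is the boundary condition equal to $1$ on the boundary arc to the right of the line through $u,v$ and $0$ on the arc to the left. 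An equivalent way to say this is that the ratio is the height-function probability, under zero boundary conditions, of an event of the form ``a unit disorder can be inserted along a dual path from $u$ to $v$''. Concretely, the correlation equals $\mu^0_{\Lambda_{r,n}^*,\beta}[e^{\text{flux}}]$-type quantities that are monotone in the domain.

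We then view $\Lambda_{r,n}$ as the subgraph of $\Lambda_{2n,n}$ obtained by setting $J=0$ on all edges outside $[-r,r]\times[-n,n]$. Ginibre gives the trivial direction (the wide box has the larger correlation). The reverse direction says that the extra width does not help by more than a factor $\crev^{n/r}$.

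To prove the reverse direction, we split the vertical segment from $v$ to $u$ into $\lceil 2n/r\rceil$ overlapping blocks of height of order $r$. Each block is a square $R_i$ of side comparable to $r$, centred on the axis. In the height-function picture for $\Lambda_{2n,n}$ with boundary data $\zeta_{uv}$, the jump of the boundary condition must be carried across from left to right. We use the pushing lemma (a Russo--Seymour--Welsh-type estimate for the cable height function of Section~\ref{sec:the_cable_system}) to show the following: conditionally on the configuration outside a strip of width $r$, with probability at least a universal constant $c>0$, inside each block there is a crossing of level-lines that separates the heights $\geq1$ from the heights $\leq0$ within the strip $[-r,r]\times\R$.

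Using FKG for the absolute value of the height function, these crossing events combine across the $O(n/r)$ blocks into a single interface confined to $[-r,r]\times[-n,n]$, at a cost of $c^{O(n/r)}$. On this event, the domain Markov property says that the configuration restricted to the narrow strip has the law of the height function on $\Lambda_{r,n}^*$ with the boundary data $\zeta_{uv}$ induced on the strip's sides. Comparing partition functions then yields
\begin{equation*}
    Z^{\zeta_{uv}}_{\Lambda^*_{r,n}}/Z^0_{\Lambda^*_{r,n}}
    \geq c^{O(n/r)}\,Z^{\zeta_{uv}}_{\Lambda^*_{2n,n}}/Z^0_{\Lambda^*_{2n,n}}.
\end{equation*}

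The main obstacle is the crossing estimate. It must be uniform in $\beta$, including the localised regime. It must also hold under the boundary condition carrying the disorder, where the naive symmetry argument behind RSW fails. We would first try a reflection of the block across the horizontal axis combined with the pushing lemma for boxes of aspect ratio $2$. The cable system is used so that level-lines at half-integer heights are genuinely continuous curves, which makes the gluing of crossings legitimate.
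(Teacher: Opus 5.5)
Your outline has the right ingredients: Kadanoff--Ceva to write the two-point function as $Z^{01}/Z^{00}$, a crossing estimate of cost $c^{O(n/r)}$ for the cable height function, and the Markov property. However, the step that turns a crossing probability into a bound on a \emph{ratio} of partition functions is missing, and what you put in its place is not correct.

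Conditioning $\tilde\mu^{01}_{\Gamma,\beta}$ (with $\Gamma$ the cable domain of $\Lambda_{2n,n}$) on an interface confined to $[-r,r]\times[-n,n]$ does not make the heights on the lines $x=\pm r$ equal to $0$ and $1$. The region left after conditioning is random, not $\Lambda_{r,n}^*$. More importantly, you never say what happens to the denominator $Z^{0}_{\Lambda_{2n,n}^*}$. A lower bound on $\tilde\mu^{01}_{\Gamma,\beta}[E]$ gives a ratio bound only if the information you condition on has the \emph{same} weight under the boundary conditions $00$ and $01$.

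The paper secures this by exploring the height-$0$ cluster from the far side, $\calE_0(\H^-_{r_-})$ with $r_-=-r-\frac12$, where both boundary conditions vanish. Conditional on the explored set $\calA\subset\H^-_{-1}$, the two boundary conditions on $\Gamma\setminus\calA$ differ only by the jumps at $u$ and $v$. So the conditional ratio of $01$-mass to $00$-mass is exactly $\Xi(D(\Gamma\setminus\calA))$. Ginibre's inequality, used here in an essential way and not only for the trivial direction, bounds this by $\Xi(\Gamma\cap\H^+_{r_-})$. Summing over $\calA$ and dropping the event from the $00$-mass gives
$\Xi(\Gamma\cap\H^+_{r_-})\geq\tilde\mu^{01}_{\Gamma,\beta}[\{\calE_0(\H^-_{r_-})\subset\H^-_{-1}\}]\,\Xi(\Gamma)$,
which is Lemma~\ref{lem:exploration_bound_ratio_bound}.

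Second, the crossing estimate, which you yourself flag as the main obstacle, is left open, and the plan you sketch is problematic. A constant lower bound in each block \emph{conditionally on the configuration outside the strip} has no reason to hold uniformly, since that configuration can impose arbitrary heights on the strip's sides. The events you want to glue (interfaces separating $\{h\geq1\}$ from $\{h\leq0\}$) are not functions of $|h|$. Also, under the boundary condition $01$ it is not clear that absolute-value FKG is available at all.

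The paper never glues blocks under $\tilde\mu^{01}$; it treats one side at a time. The one-sided event above is a vertical zero-crossing of $[r_-,-1]\times[-n,n]$ under $01$ boundary conditions. The pushing lemma \cite[Lemma~7.3]{Lammers_2023_DichotomyTheoryHeight} bounds its probability below by $c^{n/r}$ directly. The right side follows by applying the same bound to $\Gamma\cap\H^+_{r_-}$ after a reflection combined with $h\mapsto 1-h$, giving $\crev=c^2$. This two-step scheme also avoids estimating the joint probability of walls on both sides, which your two-sided interface event would require.
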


Without the correction prefactor, the inequality would be true in the opposite direction by
the Ginibre inequality.
The correction prefactors are precisely what eventually contributes to the $c s^2/2$
term in the upper bound of Equation~\eqref{eq:main}.
This reverse Ginibre inequality can be proved by translating the two-point function into
a ratio of height-function partition functions via the Kadanoff--Ceva correspondence,
using the pushing lemma to modify the domain,
and then translating back to the XY model via the same correspondence (this is
done in Section~\ref{sec:reverseinequality}).
The percolation framework for the application of the pushing lemma to the height function
is described in Section~\ref{sec:the_cable_system}.

\section{Background}
\label{section:background}

\subsection{A phase transition driven by vortices}
Since the work of Peierls~\cite{Peierls_1936_IsingsModelFerromagnetism}, it is known that
lattice models (in dimension $d\geq 2$)
can undergo phase transitions.
Substantial effort has gone into classifying such transitions by analysing critical and
near-critical behaviour, as well as possible scaling limits.

This work considers the \emph{classical XY model} or \emph{classical rotor model} on $\Z^2$.
Mermin--Wagner~\cite{MerminWagner_1966_AbsenceFerromagnetismAntiferromagnetism} proved
that the 2D XY model does not undergo a magnetisation transition: the continuous spin
symmetry cannot be spontaneously broken, even at very low temperature.
McBryan--Spencer later gave a general power-law upper bound on the two-point
function~\cite{McBryanSpencer_1977_DecayCorrelationsSOnsymmetric}. This is very specific
to the two-dimensional setting: Fröhlich, Simon, and Spencer proved that the model \emph{does} exhibit
a magnetisation transition in dimension three and higher~\cite{FrohlichSimonSpencer_1976_InfraredBoundsPhase}.
This phenomenon is closely
related to the delocalisation of the Gaussian free field on
the square lattice graph $\Z^2$ (and, more generally, on recurrent
graphs), and localisation of the Gaussian free field
on $\Z^d$ for $d\geq 3$.
Around the time of the Mermin--Wagner theorem, numerical simulations suggested that the
qualitative behaviour
at $\beta\approx0$ does not match the behaviour at $\beta\approx\infty$,
suggesting that the model may undergo some sort of phase transition of a different
nature~\cite{StanleyKaplan_1966_PossibilityPhaseTransition}.
Berezinskii gave an early theoretical explanation for the occurrence of a phase
transition in this model.
His first work on this subject~\cite{Berezinskii_1971_DestructionLongrangeOrder}
considers the second-order expansion of the model
around the ground state. In this spin-wave, or Gaussian-free-field, approximation,
correlations can be exactly computed,
leading to polynomial decay of correlations at low temperatures (an indicator of critical
behaviour).
A second work~\cite{Berezinskii_1972_DestructionLongrangeOrder} considers the effect of
topological defects, or vortices, on the model, and predicts that they provide the driving
mechanism behind the phase transition.
Kosterlitz--Thouless~\cite{KosterlitzThouless_1973_OrderingMetastabilityPhase} later gave
a more detailed analysis of the effect of vortices: they gave an estimate of the critical
inverse temperature $\beta_c$ as well as a description of the near-critical window.
They also predicted that the phase transition is of infinite order
(that the relevant thermodynamic quantities are smooth but not analytic at $\beta_c$),
setting this phase transition somewhat apart from others (which are classified according
to the number of times they are differentiable).
This phase transition is now called the Berezinskii--Kosterlitz--Thouless (BKT) transition.

Fröhlich and Spencer gave a rigorous proof of a BKT phase transition:
they proved that the two-point function decays polynomially
at high $\beta$, in contrast with exponential decay at low
$\beta$~\cite{FrohlichSpencer_1981_KosterlitzThoulessTransitionTwodimensional}.
In their proof, they analyse in tandem the XY model
and a corresponding dual model, an integer-valued height function model
on the faces of the same square lattice graph.
Such duality transforms go back to the work of
Kramers--Wannier~\cite{KramersWannier_1941_StatisticsTwodimensionalFerromagnet_I,KramersWannier_1941_StatisticsTwodimensionalFerromagnet_II}
on the Ising model, and have been used in many different situations since.
One way to view the Fröhlich--Spencer argument is that it controls the vortex
contribution at high
$\beta$ (cf.~\cite{KharashPeled_2017_FrohlichSpencerProofBerezinskiiKosterlitzThouless}),
leading to polynomial decay of correlations
as in the spin-wave Ansatz of Berezinskii~\cite{Berezinskii_1971_DestructionLongrangeOrder}.

Garban and Sepúlveda~\cite{GarbanSepulveda_2023_QuantitativeBoundsVortex} give
quantitative bounds showing, in the Villain/Coulomb-gas setting, that vortex
fluctuations are at least comparable to spin-wave fluctuations in the regimes they
consider.
These results suggest that the second-order expansion of Berezinskii does not provide a
complete approximation of the model, even at
very low temperatures
(cf.~\cite{JoseKadanoffKirkpatrick_1977_RenormalizationVorticesSymmetrybreaking} for a
physical renormalisation-group argument pointing in this direction).
In~\cite{GarbanSepulveda_2023_StatisticalReconstructionGFF}, the authors approach the
related question from a different perspective, by isolating the contribution
of the spin wave to the spins in the model.
In that case, one can ask whether the original spin wave can be reconstructed from the spins
in the model.
This question is thought to be closely related to the driving mechanism behind the BKT transition,
and the authors show that reconstruction is possible at low temperatures, but not at high
temperatures.

\subsection{Relation to the height function}

A Kramers--Wannier-type duality transforms the XY model into a height function model,
where the height function is integer-valued and defined on the faces of the square
lattice
graph~\cite{FrohlichSpencer_1981_KosterlitzThoulessTransitionTwodimensional,Dubedat_2011_TopicsAbelianSpin,EngelenburgLis_2023_ElementaryProofPhase,AizenmanHarelPeled_2022_DepinningIntegerrestrictedGaussian,Lammers_2023_BijectingBKTTransition,EngelenburgLis_2025_DualityHeightFunctions}.
Over the past 25 years, advances in the understanding of height functions have also
influenced the study of the BKT transition.
A useful starting point for this line is the work of
Sheffield~\cite{Sheffield_2005_RandomSurfaces}, which initiated
the systematic application of percolation theory to the analysis of 2D height functions.
Subsequent work established delocalisation criteria and dichotomies for height
functions~\cite{Lammers_2022_HeightFunctionDelocalisation,LammersOtt_2024_DelocalisationAbsolutevalueFKGSolidonsolid,Lammers_2023_DichotomyTheoryHeight}.
Independently, Van
Engelenburg--Lis~\cite{EngelenburgLis_2023_ElementaryProofPhase} and
Aizenman--Harel--Peled--Shapiro~\cite{AizenmanHarelPeled_2022_DepinningIntegerrestrictedGaussian}
showed that the delocalisation transition of the height function implies the BKT phase
for the XY model. Together, these works provide a percolation-based proof
of the BKT transition, alternative to the original perturbative analysis of Fröhlich--Spencer.
Two further works clarify this connection.
It was established that the Kramers--Wannier duality exactly matches the BKT phase in the
XY model to the delocalised phase
of the height function~\cite{Lammers_2023_BijectingBKTTransition}.
Van Engelenburg and Lis~\cite{EngelenburgLis_2025_DualityHeightFunctions} studied this
duality further, by applying the duality transform to map
correlators in one model to disorder variables in the dual model,
in the spirit of Kadanoff--Ceva~\cite{KadanoffCeva_1971_DeterminationOperatorAlgebra}.
We use a boundary version of this mapping:
the correlators considered here lie \emph{on the boundary of a finite planar domain},
in which case the dual disorder variables are also located on the boundary of the domain,
and can be interpreted as a ratio between finite domain partition functions with
different boundary conditions.
The Kadanoff--Ceva mapping also uses
the Brydges--Fröhlich--Spencer loop and path representation of the
model~\cite{BrydgesFrohlichSpencer_1982_RandomWalkRepresentation}. The use of this
representation in the XY setting appears in recent work of Van Engelenburg and
Lis~\cite{EngelenburgLis_2023_ElementaryProofPhase}.
Further analysis of the loop representation may be useful for the XY model, as
random-current and loop methods have been useful in the context of the Ising model
(e.g.~\cite{AizenmanDuminil-CopinSidoravicius_2015_RandomCurrentsContinuity,AizenmanDuminil-Copin_2021_MarginalTrivialityScaling}).

In the context of the Villain model, an XY model with a modified potential,
the dual height function has a square potential, and is more tractable.
For this square-potential height function, Bauerschmidt, Park, and Rodriguez proved
convergence to the Gaussian free field in a high-temperature
regime~\cite{BauerschmidtParkRodriguez_2024_DiscreteGaussianModel,
BauerschmidtParkRodriguez_2024_DiscreteGaussianModela}
(the multiplicative constant is not identified explicitly).

The connection between surface tension, percolative descriptions of interfaces, and
roughening phenomena has a long history. In the context of Ising ferromagnets, Bricmont,
Fontaine and Lebowitz derived inequalities relating the surface tension of
phase-separation interfaces to percolative properties, and discussed consequences for the
roughening transition~\cite{BricmontFontaineLebowitz_1982_SurfaceTensionPercolation}.

\subsection{Universality of 2D height functions and inspiration from the six-vertex model}
A common expectation, supported by results for several height-function models, is that
2D integer-valued height functions with a local symmetric interaction satisfy a dichotomy:
either they are localised with a finite correlation length,
or they converge to $T\cdot\Gamma$, where $\Gamma$ is the Gaussian free field and $T$ is a
multiplicative constant
depending on the model, but always satisfying $T\geq c$ where $c>0$ is some universal
constant.
Such behaviour would be notable because height functions are dual to a broad family of
spin models of varying nature.

One case relevant here is the six-vertex model's height function.
Some results and relations to the random-cluster model were already mentioned in the discussion
following Theorem~\ref{thm:Main}.
The six-vertex model has an integrable structure, through the Bethe Ansatz,
which allows the explicit calculation of certain
quantities~\cite{Duminil-CopinSidoraviciusTassion_2017_ContinuityPhaseTransition,Duminil-CopinGagnebinHarel_2021_DiscontinuityPhaseTransition,Duminil-CopinKozlowskiKrachun_2022_SixvertexModelsFree,Duminil-CopinKajetanKozlowskiLammers_2026_GaussianFreeField}.
These formulas are compatible with the expected behaviour of their counterparts for the
XY model around the BKT transition, motivating the view of the six-vertex transition as
an integrable analogue of the BKT transition
(for example, the free energy and the random-cluster model's mass are smooth but not
analytic at the transition point).
The delocalisation transition can be analysed via various
routes~\cite{ChandgotiaPeledSheffield_2021_DelocalizationUniformGraph,Lis_2021_DelocalizationSixvertexModel,Lis_2022_SpinsPercolationHeight,Duminil-CopinHarelLaslier_2022_LogarithmicVarianceHeight,GlazmanPeled_2023_TransitionDisorderedAntiferroelectric,Duminil-CopinKarrilaManolescu_2024_DelocalizationHeightFunction,GlazmanLammers_2025_DelocalisationContinuity2D}.
We also point to~\cite{Dubedat_2011_TopicsAbelianSpin} for general background on
dualities in different spin systems.
Further research is necessary to understand how far this analogy extends, and whether it
can be used to predict further properties of the XY model.

\section{Kadanoff--Ceva correspondence}
\label{sec:kadanoff_ceva}

For the square box, the Kramers--Wannier expansion gives
$Z_{\Lambda_n,\beta}=Z_{\Lambda_n^*,\beta}^0$
with our normalisations
(\cite{KramersWannier_1941_StatisticsTwodimensionalFerromagnet_I,KramersWannier_1941_StatisticsTwodimensionalFerromagnet_II};
    see
also~\cite{FrohlichSpencer_1981_KosterlitzThoulessTransitionTwodimensional,Dubedat_2011_TopicsAbelianSpin,EngelenburgLis_2023_ElementaryProofPhase,Lammers_2023_BijectingBKTTransition,EngelenburgLis_2025_DualityHeightFunctions}).
Kadanoff and Ceva~\cite{KadanoffCeva_1971_DeterminationOperatorAlgebra} observed that
this partition-function identity extends to correlation functions.
Inserting a spin correlation on one side of the duality produces \emph{disorder
operators} on the other side.
Here we only need the boundary version of this statement: XY correlations between
vertices on the outer face correspond to changing the boundary condition of the dual
height function.
We first fix the planar-duality convention used below.

\begin{definition}[Planar duality; Figure~\ref{fig:dual_graph}, \textsc{Right}]
    \label{def:planar_duality}
    Let $G$ be a leafless connected finite planar graph embedded in the plane; we call it
    the \emph{primal graph}.
    Write $\partial G\subset V(G)$ for the set of vertices incident to the outer face.
    The corresponding \emph{dual graph} $G^*$ is defined as follows.
    \begin{itemize}
        \item Its vertex set $V(G^*)$ is the disjoint union of:
            \begin{itemize}
                \item The set of \emph{inner} faces of $G$,
                \item The set of edges of $G$ incident to the outer face.
            \end{itemize}
        \item The edge set $E(G^*)$ is in natural bijection with $E(G)$.
        \item The graph boundary $\partial G^*\subset V(G^*)$ is defined as the set of
            dual vertices corresponding to the edges incident to the outer face of $G$.
    \end{itemize}

    This gives an efficient set-theoretic definition of the dual graph.
    We typically use it through its natural planar embedding,
    see Figure~\ref{fig:dual_graph}.
\end{definition}

\begin{lemma}[Kadanoff--Ceva correspondence]
    \label{lem:kadanoff_ceva}
    Let $G$ be any leafless connected finite planar graph endowed with edge weights
    $(J_{uv})_{uv\in E(G)}\subset[0,\infty)$.
    Let $G^*$ denote the dual graph, and extend $J$ to $E(G^*)$ via the natural duality bijection.
    Then for any boundary height function $\zeta:\partial G^*\to\Z$,
    we have
    \begin{equation}
        \label{eq:kadanoff_ceva}
        Z_{G^*,J}^\zeta =
        Z_{G,J}
        \Big\langle \prod_{u\in\partial G} (\sigma_u)^{\zeta_{u_+}-\zeta_{u_-}}\Big\rangle_{G,J},
    \end{equation}
    where $u_-$ and $u_+$ are the first dual vertices adjacent to $u$ in the clockwise
    and counterclockwise directions, respectively (Figure~\ref{fig:dual_graph}).
    Thus a boundary height jump across $u$ becomes the spin insertion at $u$.
    Adding a constant to $\zeta$ changes neither side of the identity.

    In particular, for $s\in(0,1)$, we have
    \begin{equation}
        \label{eq:kadanoff_ceva_slope}
        \frac{Z_{\Lambda_n^*,\beta}^s}{Z_{\Lambda_n^*,\beta}^{0}}
        =
        \Big\langle
        \Big[
            \prod\nolimits_{u\in T_n(s)} \bar\sigma_u
        \Big]\Big[
            \prod\nolimits_{v\in B_n(s)} \sigma_v
        \Big]
        \Big\rangle_{\Lambda_n,\beta},
    \end{equation}
    where the top and bottom source sets are
    \begin{equation}
        \label{eq:source_sets}
        T_n(s)=(I_s\cap[-n,n])\times\{n\},
        \qquad
        B_n(s)=(I_s\cap[-n,n])\times\{-n\},
    \end{equation}
    and
    \begin{equation}
        \label{eq:jump_locations}
        I_s:=\{k\in\Z:\lfloor s(k+\tfrac12)\rfloor>\lfloor s(k-\tfrac12)\rfloor\}.
    \end{equation}
\end{lemma}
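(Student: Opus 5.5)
The plan is to prove \eqref{eq:kadanoff_ceva} by a character (Fourier) expansion of the XY Boltzmann weights, followed by the planar flow/height bijection. The slope formula \eqref{eq:kadanoff_ceva_slope} is then read off as a special case.

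\textbf{Step 1: expansion of the weights.} For $z\in\S$ we have the generating-function identity
\begin{equation*}
e^{\frac{J}{2}(z+\bar z)}=\sum_{i,j\geq0}\frac{(J/2)^i}{i!}\frac{(J/2)^j}{j!}z^{i-j}=\sum_{k\in\Z}w_J(k)z^k .
\end{equation*}
Each undirected edge $uv$ contributes $e^{\frac{J_{uv}}2(\sigma_u\bar\sigma_v+\bar\sigma_u\sigma_v)}$. Fix an arbitrary orientation of every edge and apply the identity with $z=\sigma_u\bar\sigma_v$. This writes $Z_{G,J}\langle\prod_u\sigma_u^{d_u}\rangle_{G,J}$ as a sum over integer currents $n\in\Z^{E}$ of
\begin{equation*}
\prod_e w_{J_e}(n_e)\int\prod_u\sigma_u^{d_u+(\text{net outflow of }n\text{ at }u)}\,\diff\sigma ,
\end{equation*}
with the convention $\sigma^{-k}=\bar\sigma^k$. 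Interchanging sum and integral is justified by absolute convergence. Haar integration kills every term except those in which the net flux of $n$ at each $u$ equals $-d_u$ (or $+d_u$, depending on the sign convention). Since $w_J(k)=w_J(-k)$, reversing the orientation is harmless.

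\textbf{Step 2: currents $\leftrightarrow$ heights.} Given $h\in\Z^{V(G^*)}$, define $n_e:=h_{\text{right}(e)}-h_{\text{left}(e)}$ via the duality bijection on edges. Every primal edge borders two dual vertices: two inner faces, or an inner face and the boundary dual vertex attached to that outer edge. Leaflessness ensures this is well defined.

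Under this map, the flux at an interior vertex $u$ is a telescoping sum around $u$ and hence vanishes. For a boundary vertex $u$, the dual vertices around $u$ run from $u_-$ to $u_+$ through inner faces, so the flux telescopes to $\zeta_{u_+}-\zeta_{u_-}$.

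Conversely, given currents with these prescribed divergences, integrate them along dual paths to recover $h$, normalising the value at one boundary dual vertex. Well-definedness requires that the integral around every closed dual cycle vanishes. A closed dual cycle encloses a set of primal vertices, and the integral around it equals the total flux out of them; such cycles only enclose interior vertices, where the flux is zero. This is where planarity and connectedness enter. The recovered boundary values then match $\zeta$ up to an additive constant, because they are determined step by step from the boundary jumps.

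This gives a bijection between admissible currents and height functions with $h|_{\partial G^*}=\zeta$, modulo a global constant. The weights match: $\prod_e w_{J_e}(n_e)=\prod_{e^*}w_{J_e}(h_x-h_y)$. Summing yields \eqref{eq:kadanoff_ceva}; invariance under adding constants to $\zeta$ is immediate, since only the jumps $\zeta_{u_+}-\zeta_{u_-}$ appear.

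\textbf{Step 3: the slope case, and the main obstacle.} Apply the formula to $\Lambda_n$ with $\zeta(x)=\lfloor sx_1\rfloor$ and compute the jumps at each boundary vertex.

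On the vertical sides, consecutive boundary dual vertices have the same $x_1=\pm(n+\tfrac12)$, so there is no jump. On the top side, the vertex $(k,n)$ has $u_\pm$ with $x_1=k\mp\tfrac12$ in some order; since $s<1$, the jump is $\pm\true{k\in I_s}\in\{0,\pm1\}$. On the bottom side the cyclic orientation is reversed, so the jump has the opposite sign; this produces $\bar\sigma$ on top and $\sigma$ on bottom.

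At a corner, say $(n,n)$, the neighbouring dual vertices are $(n-\tfrac12,n+\tfrac12)$ and $(n+\tfrac12,n-\tfrac12)$. Their $x_1$-coordinates differ by one, so the corner is correctly counted as $k=\pm n$ in $I_s\cap[-n,n]$.

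Dividing by $Z_{\Lambda_n^*,\beta}^0=Z_{\Lambda_n,\beta}$ (the case $\zeta\equiv0$) gives \eqref{eq:kadanoff_ceva_slope}. I expect the main obstacle to be purely bookkeeping: keeping the orientation conventions consistent so that the overall sign (which of top and bottom receives $\bar\sigma$) and the treatment of corners come out right. The analytic content is just the generating-function identity together with Haar orthogonality.
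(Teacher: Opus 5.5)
Your proof is correct. Note that the paper does not prove this lemma at all: it quotes it from the Kramers--Wannier/Kadanoff--Ceva literature (the boundary version via the duality papers of Van Engelenburg--Lis and Lammers, which go through the Brydges--Fr\"ohlich--Spencer path/loop expansion). Your argument fills that gap with the standard self-contained proof. It has three parts: the expansion $e^{J\cos(\theta_u-\theta_v)}=\sum_k w_J(k)e^{ik(\theta_u-\theta_v)}$, Haar orthogonality forcing integer currents with divergence $\mp d$, and the planar current/height bijection. Your Step 3 is also right: jumps occur exactly at $k\in I_s$, there are none on the vertical sides, corners are counted as $k=\pm n$, and with the counterclockwise convention $\bar\sigma$ sits on top and $\sigma$ on the bottom. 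The nearest thing in the paper is the cable-system computation, where Poisson up/down jumps on an edge are summed to give $w_\beta$. That is your Step 1 identity, but with the pair $(i,j)$ kept separate. The paper keeps the pair because it later needs a spatial exploration/percolation structure. You collapse it to the current $i-j$, which is all a partition-function identity needs, so the lemma becomes purely combinatorial.

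Two small corrections. First, leaflessness is not what makes Step 2 work. A leafless graph can have a bridge or a cut vertex on the outer face, for instance two cycles joined by a path or sharing a vertex. Then an edge can have the outer face on both sides, or a boundary vertex can meet the outer face in two angles. In that case your claim that the dual vertices around $u$ run from $u_-$ to $u_+$ through inner faces fails, and $u_\pm$ is not even well defined. What you actually use is $2$-connectedness. It holds for $\Lambda_{m,n}$ and for the primal graphs of simply connected continuum domains, which are the only cases the paper needs, and the statement carries the same implicit restriction. In that setting every boundary dual vertex is a leaf of $G^*$ (Definition~\ref{def:planar_duality}). This fact is what justifies your claim that dual cycles enclose only non-boundary vertices. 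It is also why $\Lambda_n^*$ in the slope formula must be read with no edges between consecutive boundary dual vertices. Otherwise extra factors $w_\beta(\zeta_x-\zeta_y)$ appear, and even $Z_{\Lambda_n,\beta}=Z^0_{\Lambda_n^*,\beta}$ fails.

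Second, the sign bookkeeping you flag as the main obstacle is immaterial. Since $w_J$ is even, $Z^{\zeta}_{G^*,J}=Z^{-\zeta}_{G^*,J}$. Equivalently, the XY measure is invariant under $\sigma\mapsto\bar\sigma$, so $\langle\prod_u\sigma_u^{d_u}\rangle$ is real and unchanged under $d\mapsto -d$. Which row carries $\bar\sigma$ therefore cannot affect the identity.
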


\section{Lower bound on the free energy}
\label{sec:lower_bound}

We first prove the following auxiliary inequality.

\begin{lemma}
    Fix $n$ and let $\zeta:\partial\Lambda_n^*\to\Z$ denote any boundary condition.
    Write $\zeta^+:=\zeta\vee 0$ and $\zeta^-:=(-\zeta)^+$ for the positive and negative
    parts of $\zeta$ respectively,
    so that $\zeta=\zeta^+-\zeta^-$.
    Then for any $\beta\in[0,\infty)$, we have
    \begin{equation}
        \label{eq:boundary_fkg}
        Z_{\Lambda_n^*,\beta}^{\zeta^+} \cdot Z_{\Lambda_n^*,\beta}^{-\zeta^-}
        \geq
        Z_{\Lambda_n^*,\beta}^\zeta \cdot Z_{\Lambda_n^*,\beta}^0.
    \end{equation}
\end{lemma}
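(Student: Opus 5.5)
The plan is to read \eqref{eq:boundary_fkg} as log-supermodularity of the partition function in its boundary condition, evaluated at the pair $(\zeta,0)$. Indeed $\zeta^+=\zeta\vee0$ and $-\zeta^-=\zeta\wedge0$. The inequality then follows from the Ahlswede--Daykin four functions theorem (equivalently, the FKG/Holley lattice condition) on the distributive lattice $\Z^{V}$, where $V=V(\Lambda_n^*)$. For a boundary condition $\xi$, write
\[
\phi_\xi(h):=\true{h|_{\partial\Lambda_n^*}=\xi}\prod_{uv\in E}w_\beta(h_u-h_v),
\]
so that $Z^\xi_{\Lambda_n^*,\beta}=\sum_h\phi_\xi(h)$. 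Set $f_1=\phi_\zeta$, $f_2=\phi_0$, $f_3=\phi_{\zeta\vee0}$ and $f_4=\phi_{\zeta\wedge0}$. The four functions theorem then requires the pointwise inequality
\[
f_1(h)\,f_2(h')\leq f_3(h\vee h')\,f_4(h\wedge h').
\]
Summing over $h,h'$ would give exactly \eqref{eq:boundary_fkg}.

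\textbf{Boundary indicators.} If $h|_{\partial}=\zeta$ and $h'|_\partial=0$, then $(h\vee h')|_\partial=\zeta\vee0$ and $(h\wedge h')|_\partial=\zeta\wedge0$. So whenever the left side is nonzero, both indicators on the right equal one.

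\textbf{Edge weights.} Edge by edge, it suffices to show
\[
w(a-b)\,w(a'-b')\leq w(a\vee a'-b\vee b')\,w(a\wedge a'-b\wedge b')
\]
for all integers $a,a',b,b'$, where $w=w_\beta$. If $(a-a')$ and $(b-b')$ have the same sign, the right side is a permutation of the left. Otherwise, say $a\geq a'$ and $b<b'$. The pairs of differences $\{a-b',\,a'-b\}$ and $\{a-b,\,a'-b'\}$ have the same sum, and the first pair is more spread out than the second. So the inequality is precisely discrete log-concavity of $k\mapsto w(k)$ on $\Z$, i.e.\ $w(k)^2\geq w(k-1)w(k+1)$ for all $k$, together with positivity of $w$ when $\beta>0$. (The case $\beta=0$ is trivial: $w_0=\true{k=0}$ and all partition functions are degenerate, or one argues by continuity in $\beta$.)

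\textbf{Log-concavity of $w$.} By definition $w=p*\tilde p$, where $p(i)=(\beta/2)^i/i!\,\true{i\geq0}$ and $\tilde p(j)=p(-j)$. The sequence $p$ is log-concave with no internal zeros, since $p(i)^2/(p(i-1)p(i+1))=(i+1)/i\geq1$, and $\tilde p$ is log-concave as its reflection. Convolution of log-concave sequences on $\Z$ without internal zeros is log-concave; this is the discrete Prékopa/Hoggar theorem. Alternatively, $w(k)=I_{|k|}(\beta)$ and one may cite the Turán-type inequality for modified Bessel functions. I expect this step to be the main (if classical) obstacle, and would simply cite the convolution theorem.

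\textbf{Applying the four functions theorem.} $\Z^V$ is infinite, so I would first truncate heights to $[-N,N]$. The truncated lattice is a finite distributive lattice, and it is closed under $\vee$ and $\wedge$, so the pointwise inequality still holds there and the theorem applies. Letting $N\to\infty$ then gives \eqref{eq:boundary_fkg} by monotone convergence, since all the partition functions are finite by the summability of $w$.
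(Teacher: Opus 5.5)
Your proof is correct, and it packages the argument differently from the paper. The paper works on a tailored sublattice $\Omega'$. There the interior heights are free, but the boundary has only two binary switches: $h|_{Q_-}$ equals either $\zeta|_{Q_-}$ or $0$ as a whole, and $h|_{Q_+}$ equals either $0$ or $\zeta|_{Q_+}$. The paper applies the Harris--FKG inequality to the increasing events $A=\{h|_{Q_-}=0\}$ and $B=\{h|_{Q_+}=\zeta|_{Q_+}\}$, together with the identity $\mu[A\cap B]\mu[A^c\cap B^c]-\mu[A^c\cap B]\mu[A\cap B^c]=\mu[A\cap B]-\mu[A]\mu[B]$, which it leaves implicit. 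The four cells $A\cap B$, $A^c\cap B^c$, $A^c\cap B$ and $A\cap B^c$ correspond exactly to the boundary conditions $\zeta^+$, $-\zeta^-$, $\zeta$ and $0$.

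You instead apply the Ahlswede--Daykin four functions theorem on the (truncated) full lattice $\Z^V$. This makes the lemma a direct consequence of the pointwise lattice condition. It also gives the general log-supermodularity $Z^{\xi\vee\xi'}Z^{\xi\wedge\xi'}\geq Z^{\xi}Z^{\xi'}$ in one stroke, at the price of citing a slightly stronger theorem than FKG. Both routes rest on the same input: supermodularity of $(x,y)\mapsto\log w_\beta(x-y)$, that is, log-concavity of $w_\beta$. The paper simply asserts this, whereas you justify it (via convolution of log-concave sequences, or Tur\'an's inequality for $I_k(\beta)$). You also spell out the truncation to a finite lattice, which the paper passes over.

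One wording slip in your edge-weight step. With $a\geq a'$ and $b<b'$, the difference $a-b$ is the largest of the four and $a'-b'$ the smallest. So it is the left-hand pair $\{a-b,\,a'-b'\}$ that is more spread out than $\{a-b',\,a'-b\}$, not the other way round. That is exactly what log-concavity needs to give $w(a-b)\,w(a'-b')\leq w(a-b')\,w(a'-b)$, so your conclusion stands; only the sentence is reversed.
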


\begin{proof}
    This is the FKG inequality for height functions with convex potentials,
    once the ambient probability measure is set up correctly.
    The case $\beta=0$ is trivial, so assume $\beta>0$.
    If $\zeta\geq 0$ or $\zeta\leq 0$ then the lemma is trivial; we study the case that
    $\zeta$ takes both positive and negative values below.
    Write $\partial\Lambda_n^*$ as the disjoint union of
    $Q_-:=\{\zeta<0\}$, $Q_+:=\{\zeta>0\}$, and $Q_0:=\{\zeta=0\}$.
    Write $\Omega'$ for the set of configurations
    \begin{equation}
        \Omega' := \left\{ h\in \Z^{\Lambda_n^*} :
            h|_{Q_-} \in \{\zeta|_{Q_-},0\},\,
            h|_{Q_+} \in \{0,\zeta|_{Q_+}\},\,
        h|_{Q_0}=0 \right\}.
    \end{equation}
    Notice that $\Omega'$ has the structure of a distributive lattice.
    We equip $\Omega'$ with the probability measure $\mu$ defined by
    \begin{equation}
        \mu[\{h\}] \propto \prod_{xy\in E(\Lambda_n^*)} w_\beta(h(y)-h(x)).
    \end{equation}
    Since $w_\beta$ is log-concave, $\mu$ satisfies the FKG inequality on $\Omega'$
    with respect to the natural partial order.
    Let $A:=\{h|_{Q_-}=0\}$ and $B:=\{h|_{Q_+}=\zeta|_{Q_+}\}$.
    Both events are increasing, hence
    \begin{equation}
        \mu[A\cap B]\mu[A^c\cap B^c]\geq
        \mu[A^c\cap B]\mu[A\cap B^c].
    \end{equation}
    Up to normalisation, those four probabilities correspond precisely to the four
    partition functions in Equation~\eqref{eq:boundary_fkg}.
\end{proof}

Since $Z_{\Lambda_n^*,\beta}^\zeta = Z_{\Lambda_n^*,\beta}^{\zeta+a}$ for any
integer constant $a$,
we may iterate the above lemma over the level sets of the boundary condition and then
apply the Kadanoff--Ceva identity (Equation~\eqref{eq:kadanoff_ceva_slope}) to obtain
\begin{equation}
    \label{eq:lower_bound_kadanoff_ceva_new_FKG_boundary}
    \frac{Z_{\Lambda_n^*,\beta}^s}{Z_{\Lambda_n^*,\beta}^0}
    \leq
    \prod_{u\in T_n(s)} \langle \bar\sigma_u \sigma_{u-2ne_2}\rangle_{\Lambda_n,\beta}.
\end{equation}
By the Ginibre inequality, we may further upper bound this quantity by replacing the
finite graph $\Lambda_n$ by the graph limit, yielding
\begin{equation}
    \label{eq:lower_bound_kadanoff_ceva_new_FKG_boundary_2}
    \frac{Z_{\Lambda_n^*,\beta}^s}{Z_{\Lambda_n^*,\beta}^0}
    \leq
    (\langle \sigma_0 \bar\sigma_{2ne_1}\rangle_{\Z^2,\beta})^{|T_n(s)|}.
\end{equation}
For fixed $\beta$ and $s$, the right-hand side of this equation has
asymptotics
\begin{equation}
    \label{eq:lower_bound_asymptotic}
    \big(e^{-2n\cdot\frakm_\XY(\beta)(1+o(1))}\big)^{2ns+O(1)}
    =
    e^{-4sn^2\cdot\frakm_\XY(\beta)(1+o(1))}
\end{equation}
as $n\to\infty$ by the definition of the mass
and the fact that $|T_n(s)|=2ns+O(1)$.
Combining this asymptotic estimate with
the definition of the free energy (Equation~\eqref{eq:free_energy}), we obtain the
desired lower bound $f_\beta(s)-f_\beta(0)\geq s\cdot\frakm_\XY(\beta)$ in Theorem~\ref{thm:Main}.

\section{Upper bound on the free energy}
\label{sec:upper_bound_overview}

The reverse Ginibre inequality
(Theorem~\ref{thm:reverse})
plays an essential role in the proof of the upper bound in Theorem~\ref{thm:Main}.
It is used in the following way.
The Ginibre inequality tells us that the two-point function is increasing
in the domain, but suppose for a moment that the Ginibre inequality
holds true for multi-point correlation functions as well.
This ``generalised Ginibre inequality'' would imply that
(when $\Lambda'_{n}(s)\subset\Lambda_{n,n}$)
\begin{multline}
    \label{eq:multi_point_ginibre}
    \frac{Z_{\Lambda_n^*,\beta}^s}{Z_{\Lambda_n^*,\beta}^0}
    =
    \Big\langle\Big[\prod\nolimits_{u\in
    T_n(s)}\bar\sigma_u\Big]\Big[\prod\nolimits_{v\in
    B_n(s)}\sigma_v\Big]\Big\rangle_{\Lambda_n,\beta}
    \\
    \geq
    f(n,s)\cdot
    \Big\langle\Big[\prod\nolimits_{u\in T_n(s)}\bar\sigma_u\Big]\Big[\prod\nolimits_{v\in
    B_n(s)}\sigma_v\Big]\Big\rangle_{\Lambda'_{n}(s),\beta},
\end{multline}
where $f(n,s)=1$ and
where $\Lambda'_{n}(s)$ is the domain
\begin{equation}
    \bigcup_{k\in I_s\cap[-n,n]}\left(\Lambda_{1/(20s),n}+(k,0)\right)
    \subset\Lambda_{n,n}.
\end{equation}
This domain is a union of long strips of width $1/(10s)$ around the vertical lines
containing the jump locations, and is contained in $\Lambda_{n,n}$
(we pick $n$ appropriately so the jumps are not too close to the vertical boundary,
and it is indeed true that $\Lambda'_{n}(s)\subset\Lambda_{n,n}$).
If the strip width is not an integer, we simply round it down to the nearest integer,
which does not change the asymptotics.
Suppose that Equation~\eqref{eq:multi_point_ginibre} is true (with $f=1$).
The correlation function on the right in Equation~\eqref{eq:multi_point_ginibre}
is just a product of two-point functions (since every connected component
of the domain $\Lambda'_{n}(s)$ contains exactly one source and one sink).
Thus we may use the reverse Ginibre inequality to obtain
\begin{multline}
    \label{eq:multi_point_ginibre_with_reverse}
    \Big\langle\Big[\prod\nolimits_{u\in T_n(s)}\bar\sigma_u\Big]\Big[\prod\nolimits_{v\in
    B_n(s)}\sigma_v\Big]\Big\rangle_{\Lambda'_{n}(s),\beta}
    \\
    =
    \left(\langle
    \bar{\sigma}_{(0,n)}\sigma_{(0,-n)}\rangle_{\Lambda_{1/(20s),n},\beta}\right)^{|I_s\cap[-n,n]|}
    \geq \left(\crev^{20sn} \cdot \langle
    \bar{\sigma}_{(0,n)}\sigma_{(0,-n)}\rangle_{\Lambda_{2n,n},\beta}\right)^{|I_s\cap[-n,n]|},
\end{multline}
where the inequality is the reverse Ginibre inequality (Theorem~\ref{thm:reverse}).
Since $|I_s\cap[-n,n]|=2ns+O(1)$ and since the two-point function has asymptotics
$e^{-2n\cdot\frakm_\XY(\beta)(1+o(1))}$~\cite[Claim 8]{Lammers_2023_BijectingBKTTransition},
we get
\begin{equation}
    \frac{Z_{\Lambda_n^*,\beta}^s}{Z_{\Lambda_n^*,\beta}^0}
    \geq
    f(n,s)\cdot
    \crev^{40s^2n^2(1+o(1))}\cdot
    e^{-4sn^2\cdot\frakm_\XY(\beta)(1+o(1))}
\end{equation}
as $n\to\infty$.
It turns out that we cannot prove Equation~\eqref{eq:multi_point_ginibre} with
$f(n,s)=1$, but we can prove it with $f(n,s)=c^{n^2s^2}$ for some universal constant $c\in(0,1)$.
The previous display then yields the desired upper bound in Theorem~\ref{thm:Main}
(Equation~\eqref{eq:main}).

The remaining sections rigorously justify the two gaps in the proof (the reverse Ginibre
    inequality of Theorem~\ref{thm:reverse}, as well as
Equation~\eqref{eq:multi_point_ginibre} with the nontrivial correction factor).
Section~\ref{sec:the_cable_system} contains a description of a framework
that enables the application of percolation theory arguments to our height functions.
Section~\ref{sec:reverseinequality} uses this framework to prove the reverse Ginibre inequality.
Finally, Section~\ref{sec:the_technical_step} establishes
Equation~\eqref{eq:multi_point_ginibre} with the correction factor $f(n,s)=c^{n^2s^2}$.

\section{The cable system extension of the height function}
\label{sec:the_cable_system}

Several models of statistical mechanics admit a useful extension to the cable graph.
This idea goes back at least
to~\cite[Page~494]{Berezinskii_1971_DestructionLongrangeOrder} in the context of the Villain model,
and is used extensively for the discrete Gaussian free field, starting with
Lupu~\cite{Lupu_2016_LoopClustersRandom},
and other models with a quadratic interaction
(cf.~\cite{DubedatFalconet_2022_RandomClustersVillain}).
Similar ideas apply to the height function of the XY model.

Let $\BC:=((\Z+\frac12)\times\R)\cup(\R\times(\Z+\frac12))$ be the cable system
associated to the dual graph $(\Z+\frac12)^2$ of the square lattice $\Z^2$.
It is endowed with the subspace topology and the Lebesgue measure,
denoted $\diff x$ or $\operatorname{Leb}(\blank)$.
A \emph{continuum domain}, or simply a \emph{domain}, is a bounded open subset
$\Gamma\subset\BC$ consisting of finitely many connected components.
For fixed $n,m\in\Z_{\geq 1}$, let
$\Gamma_{n,m}:=(-n-\frac12,n+\frac12)\times(-m-\frac12,m+\frac12)\cap\BC$ denote the
continuum domain associated with the duality pair $(\Lambda_{n,m},\Lambda_{n,m}^*)$.
We write $\Gamma_n:=\Gamma_{n,n}$.

The height function can be extended from $\Lambda_n^*$ to
$\bar\Gamma_n\supset\Lambda_n^*$ in a natural way.
The construction is essentially as follows:
first sample two independent sets of \emph{up} and \emph{down} jumps from Poisson point
processes on $\Gamma$ with intensity $\frac\beta2\operatorname{Leb}|_\Gamma$,
then condition on the event that these jumps are the gradient of a height function with
the desired boundary condition.
For the following definition, if $x\in\Gamma\setminus(\Z+\frac12)^2$, write
\begin{equation}
    h(x^-):=
    \begin{cases}
        \lim_{\lambda\to 0^+} h(x-\lambda e_1) &\text{if $x$ lies on a horizontal line segment}, \\
        \lim_{\lambda\to 0^+} h(x-\lambda e_2) &\text{if $x$ lies on a vertical line segment}.
    \end{cases}
\end{equation}
Define $h(x^+)$ similarly.

More precisely:
\begin{itemize}
    \item Consider a continuum domain $\Gamma$,
    \item Let $\BP_{\Gamma,\beta}$ denote the Poisson point process on $\Gamma\times\{+,-\}$
        with intensity $\frac\beta2\operatorname{Leb}\times(\delta_++\delta_-)$, and
        denote the two random sets of marks by $(\Pi_+,\Pi_-)\in\calP(\Gamma)\times\calP(\Gamma)$,
    \item Let $h:\bar\Gamma\to\frac12\Z$ denote any function such that:
        \begin{itemize}
            \item $h$ is integer-valued and continuous on $\bar\Gamma\setminus(\Pi_+\cup\Pi_-)$,
            \item For every $x\in\Pi_+$, we have $h(x^-)+\frac12 = h(x) = h(x^+)-\frac12$,
            \item For every $x\in\Pi_-$, we have $h(x^-)-\frac12 = h(x) = h(x^+)+\frac12$,
        \end{itemize}
    \item Let $W$ denote the event that there exists such a height function $h$.
\end{itemize}

Thus, on $W$, the \emph{gradient} of $h$ is uniquely determined on each connected
component of $\bar\Gamma$.
This gradient can be turned into a height function by either choosing a root vertex in
each connected component of $\bar\Gamma$,
or by imposing boundary conditions on $\partial\Gamma$ and restricting to the event that
the gradient
matches the gradient of that boundary condition.

We shall also write $\BM_{\Gamma,\beta}$ for the non-normalised version of
$\BP_{\Gamma,\beta}$ restricted to $W$.
More precisely, $\BM_{\Gamma,\beta}:=\ind{W} e^{\beta\operatorname{Leb}(\Gamma)}\BP_{\Gamma,\beta}$.
Finally, we write $\BM_{\Gamma,\beta}^\zeta$ for the restriction of $\BM_{\Gamma,\beta}$
to the event $\{h|_{\partial\Gamma}=\zeta\}$.

Write $Z_{\Gamma,\beta}$ for the total mass of $\BM_{\Gamma,\beta}$ and
$Z_{\Gamma,\beta}^\zeta$ for the mass of $\BM_{\Gamma,\beta}^\zeta$.
Even though those two measures are not probability measures,
we shall still occasionally speak of ``conditioned'' measures.
The conditioned measures are normalised and thus probability measures.

\begin{proposition}
    For any $\beta\in(0,\infty)$, $n\in\Z_{\geq 1}$, and $\zeta\in\Z^{\partial\Lambda_n^*}$,
    the push-forward of $\BM_{\Gamma_n,\beta}^\zeta$ under $h\mapsto h|_{\Lambda_n^*}$ is
    \begin{equation}
        Z_{\Lambda_n^*,\beta}^\zeta
        \cdot
        \mu_{\Lambda_n^*,\beta}^\zeta.
    \end{equation}
    This means that the measure $\BM_{\Gamma_n,\beta}^\zeta$ encodes
    simultaneously the law of $h|_{\Lambda_n^*}$ as well as
    the partition function.
\end{proposition}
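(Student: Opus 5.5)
The plan is to compute the push-forward edge by edge, using the independence of the Poisson process over the cable segments. First I fix the geometry. The cable domain $\Gamma_n$ consists of the open unit segments joining adjacent vertices of $\Lambda_n^*$, the dual vertices themselves, and possibly some boundary half-segments. Each dual edge $xy\in E(\Lambda_n^*)$ corresponds to a segment $e_{xy}\subset\bar\Gamma_n$ of length one, and these segments are disjoint apart from their endpoints. Up to a null set, $\Gamma_n$ is the union of these segments, together with pieces that stick outside $\bar\Gamma_n$ or lie in the boundary region. I will check that $\operatorname{Leb}(\Gamma_n)=|E(\Lambda_n^*)|$, or more precisely that any extra length carries no marks that could affect $h|_{\Lambda_n^*}$. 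If such extra cables exist, they contribute a factor $e^{\beta L}\cdot e^{-\beta L}\cdot\BP[\text{no net jump}]$, and I need to verify that this factor is $1$. The cleanest route is to confirm from the definition that $\Gamma_n$ consists exactly of the open edges between dual vertices.

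Next I use the independence property of Poisson processes. The restrictions of $(\Pi_+,\Pi_-)$ to different segments $e_{xy}$ are independent. On a segment of length one, the numbers of up-marks $N_+$ and down-marks $N_-$ are independent Poisson random variables with mean $\beta/2$. The event $W$, intersected with $\{h|_{\partial\Gamma}=\zeta\}$ and $\{h|_{\Lambda_n^*}=g\}$, decomposes as follows: $g|_{\partial\Lambda_n^*}=\zeta$, and for every edge the net jump along the oriented segment from $x$ to $y$ equals $g(y)-g(x)$. Each mark changes $h$ by $\pm1$ in total, and the half-integer value at the mark itself is irrelevant. Along the segment, $h$ is integer-valued away from the marks and is determined by its starting value plus the running count of jumps. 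So given the vertex values, consistency is exactly the condition $N_+-N_-=g(y)-g(x)$ on each segment, and $h$ on the segment is then uniquely determined.

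Then I compute. For a fixed $g$ we obtain
\[
\BM_{\Gamma_n,\beta}^\zeta[h|_{\Lambda_n^*}=g]
=\true{g|_{\partial\Lambda_n^*}=\zeta}\, e^{\beta|E|}\prod_{xy\in E}\BP[N_+-N_-=g(y)-g(x)].
\]
Each factor satisfies $e^{\beta}\BP[N_+-N_-=k]=\sum_{i-j=k}\frac{(\beta/2)^i}{i!}\frac{(\beta/2)^j}{j!}=w_\beta(k)$, since the factor $e^{\beta}$ exactly cancels $e^{-\beta/2}\cdot e^{-\beta/2}$. The orientation of the edge does not matter because $w_\beta$ is symmetric. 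So the product equals $\true{g|_{\partial}=\zeta}\prod_{xy}w_\beta(g_x-g_y)$, which is $Z^\zeta_{\Lambda_n^*,\beta}\mu^\zeta_{\Lambda_n^*,\beta}[\{g\}]$ by Definition~\ref{def:dual_height_function}. Summing over $g$ also identifies the total masses.

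The main obstacle I expect is the bookkeeping in the first step. I need to ensure that the cable domain matches the dual edge set exactly, with no stray lengths and boundary vertices included correctly via $\bar\Gamma_n$. I also need to justify that the vertex values determine $h$ almost surely, since almost surely no marks fall on vertices. Everything else is a direct Poisson computation.
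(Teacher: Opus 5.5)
Your proposal is correct and follows the paper's proof. Both use the independence of the Poisson marks across unit cables, with $N_+,N_-$ independent Poisson$(\beta/2)$ on each cable; the factor $e^{\beta\operatorname{Leb}(\Gamma_n)}$ cancels the Poisson normalisations, and summing over $i-j=h_x-h_y$ gives $w_\beta$.

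The bookkeeping you flag does go through. The sides of the open box sit at $\pm(n+\frac12)\in\Z+\frac12$, so $\Gamma_n$ is exactly the interior dual vertices together with the $4n(2n+1)$ open unit segments dual to the edges of $\Lambda_n$. There are no half-segments and no boundary-to-boundary cables, so $\operatorname{Leb}(\Gamma_n)=|E(\Lambda_n^*)|$. Your explicit treatment of the consistency event, and of marks almost surely avoiding vertices, spells out what the paper leaves implicit.
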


\begin{proof}
    On each cable edge, the numbers of up and down jumps are independent Poisson random
    variables with mean $\beta/2$.
    After multiplying by the factor $e^{\beta\operatorname{Leb}(\Gamma_n)}$, the mass of
    a configuration with $i$ up jumps and $j$ down jumps on that edge is
    \begin{equation}
        \frac{(\beta/2)^i}{i!}\frac{(\beta/2)^j}{j!}.
    \end{equation}
    Summing over all pairs $(i,j)$ with $i-j=h_x-h_y$ gives exactly the edge weight
    $w_\beta(h_x-h_y)$ from Definition~\ref{def:dual_height_function}.
    Taking the product over all dual edges gives the claimed marginal and the correct total mass.
\end{proof}

The cable system provides us with a few convenient tools that come almost for free.
They are summarised in the following proposition.

\begin{proposition}
    Consider the measure $\BM_{\Gamma,\beta}$.
    Then the following properties hold.
    \begin{itemize}
        \item \textbf{Intermediate value theorem.}
            For $\BM_{\Gamma,\beta}$-almost every configuration,
            the following property holds true.
            Let $h$ denote a height function (defined either by choosing a root vertex or
            by imposing boundary conditions).
            If $\gamma\subset\bar\Gamma$ is any continuous path connecting two points
            $x,y\in\bar\Gamma$,
            and if $k\in\Z$ is any integer between $h(x)$ and $h(y)$,
            then there exists a point $z\in\gamma$ such that $h(z)=k$.
        \item \textbf{Markov property.}
            Let $h$ denote a height function (defined either by choosing a root vertex or
            by imposing boundary conditions).
            Then for any closed set $A\subset\bar\Gamma$ with $\partial\Gamma\subset A$
            and with $\Gamma\setminus A$ a finite union of continuum domains, the
            conditional law of $h$ on $\Gamma\setminus A$, given $h|_A$, is
            \begin{equation}
                \bigotimes_{\Gamma'\in\calC(\Gamma\setminus
                A)}\frac1{Z_{\Gamma',\beta}^{h|_{\partial
                \Gamma'}}}\BM_{\Gamma',\beta}^{h|_{\partial \Gamma'}}.
            \end{equation}
    \end{itemize}
\end{proposition}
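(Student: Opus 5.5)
The plan is to read both properties directly off the Poisson construction, using two almost-sure facts about $\BP_{\Gamma,\beta}$. First, $\Pi_+\cup\Pi_-$ is finite, since $\operatorname{Leb}(\Gamma)<\infty$. Second, any fixed Lebesgue-null set is avoided by the marks almost surely, and the marks are pairwise distinct. The null sets we will use are the vertex set $(\Z+\frac12)^2\cap\bar\Gamma$ and the finitely many boundary points of the domains in question. Since $\BM_{\Gamma,\beta}$ is absolutely continuous with respect to $\BP_{\Gamma,\beta}$, both facts transfer to $\BM_{\Gamma,\beta}$-almost every configuration, and I fix such a configuration in $W$.

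\textbf{Intermediate value theorem.} The set $\bar\Gamma$ is a finite metric graph. The image of the path $\gamma$ is a compact connected subset of it containing $x$ and $y$. Such a set contains an arc, that is, an injective path $\gamma'$ from $x$ to $y$ (a Peano-continuum fact, here elementary because the space is a finite graph). It therefore suffices to treat injective paths.

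Along $\gamma'$ the function $h$ is integer-valued and constant away from the finitely many marks crossed. The value is continuous through vertices, because no mark sits at a vertex and the height is defined consistently on $\bar\Gamma$. At each mark, $h$ passes from $h(z^-)$ through $h(z^-)\pm\frac12$ to $h(z^-)\pm1$. Hence the sequence of integer values visited by $\gamma'$ changes by exactly $\pm1$ at finitely many times.

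The endpoints $x$ and $y$ may themselves be marks with half-integer heights. In that case I use the adjacent integer values $h(x^\pm)$, one of which lies on the side of $\gamma'$. A discrete intermediate value argument then produces, for every integer $k$ between $h(x)$ and $h(y)$, a point $z\in\gamma'$ with $h(z)=k$. The only care needed is this endpoint bookkeeping.

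\textbf{Markov property.} Write $\Gamma\setminus A=\bigsqcup_{\Gamma'}\Gamma'$ with finitely many components. Each boundary $\partial\Gamma'$ is a finite subset of $A$, so it is Lebesgue-null. I will argue conditionally on the restriction of the marks to $A\cap\Gamma$, and hence on $h|_A$. This is legitimate for fixed closed $A$; for random $A$ determined by $h$ one would need a stopping-set version, which I would not claim here.

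\begin{enumerate}
    \item By the independence property of Poisson processes, the restrictions of $(\Pi_+,\Pi_-)$ to $A\cap\Gamma$ and to each $\Gamma'$ are mutually independent Poisson processes with the same intensity.
    \item The event $W\cap\{h|_{\partial\Gamma}=\zeta\}$ factorises. It is the intersection of a consistency event on $A$, which fixes $h|_A$ and in particular its values on each $\partial\Gamma'$ (integers a.s.), with the events that the marks in each $\Gamma'$ form the gradient of a height function with boundary data $h|_{\partial\Gamma'}$. This uses that $\bar\Gamma'\cap A=\partial\Gamma'$, so that the pieces interact only through these boundary values.
    \item The normalising weight splits, since $\operatorname{Leb}(\Gamma)=\operatorname{Leb}(A\cap\Gamma)+\sum_{\Gamma'}\operatorname{Leb}(\Gamma')$.
    \item Therefore the conditional measure given $h|_A$ is the product of the measures $\BM_{\Gamma',\beta}^{h|_{\partial\Gamma'}}$, and normalising each factor by $Z_{\Gamma',\beta}^{h|_{\partial\Gamma'}}$ gives exactly the claimed formula.
\end{enumerate}

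The step that needs the most care is the factorisation of $W$ in step 2. One must check that a global height function exists if and only if compatible local ones exist on $A$ and on each $\Gamma'$. This holds because gluing height functions along the finite set of shared boundary points is consistent exactly when the boundary values agree, and the boundary points carry no marks almost surely.
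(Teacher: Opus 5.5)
Your intermediate value argument is correct. It is a fleshed-out version of the paper's one-line justification: finitely many marks, none at vertices, and $h$ moving by $\pm\frac12$ through each mark. The reduction to an arc and the endpoint bookkeeping are exactly what is needed.

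The Markov part rests on the same idea as the paper, independence of the Poisson restrictions to disjoint sets, but your set-up of the conditioning contains a step that fails. You assert two things:
\begin{itemize}
\item conditioning on the marks in $A\cap\Gamma$ amounts to conditioning on $h|_A$;
\item $W\cap\{h|_{\partial\Gamma}=\zeta\}$ is the intersection of a consistency event on $A$ ``which fixes $h|_A$'' with independent events in the $\Gamma'$.
\end{itemize}
Both hold only when every connected component of $A$ meets $\partial\Gamma$. Take $A=\partial\Gamma\cup\{p\}$ with $p$ an interior point of a cable edge. Then $\Gamma\setminus A$ is still a finite union of continuum domains, and the marks in $A\cap\Gamma$ are almost surely empty. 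Yet $h(p)$ is random and is part of the boundary data $h|_{\partial\Gamma'}$ in the claimed formula. Conditioning on the marks in $A\cap\Gamma$ therefore does not change the law, so it cannot produce the stated (random) product measure. More generally, the height on a component of $A$ that avoids $\partial\Gamma$ is determined by marks inside the adjacent $\Gamma'$, and consistency couples those $\Gamma'$. So $W\cap\{h|_{\partial\Gamma}=\zeta\}$ is a union of product events, not a product event.

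The repair is to condition on $h|_A$ directly. Fix an admissible $g$ on $A$ with $g|_{\partial\Gamma}=\zeta$. Your gluing argument along the finite sets $\partial\Gamma'=\bar\Gamma'\cap A$ shows that $W\cap\{h|_{\partial\Gamma}=\zeta\}\cap\{h|_A=g\}$ is the intersection of two kinds of events:
\begin{itemize}
\item the event that $g$ is a valid height function on $A$ for the marks in $A\cap\Gamma$;
\item for each $\Gamma'$, the event that the marks in $\Gamma'$ form a height function on $\bar\Gamma'$ with boundary values $g|_{\partial\Gamma'}$.
\end{itemize}
That is a genuine product event. Moreover, $h|_A$ almost surely determines the marks in $A\cap\Gamma$: they are the points where $h$ is half-integer, they avoid the finitely many relative boundary points of $A\cap\Gamma$, and their signs can be read off from neighbouring values. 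Disintegrating over $g$ and using your steps 1 and 3 then gives the product of the normalised measures $\BM_{\Gamma',\beta}^{g|_{\partial\Gamma'}}$, as claimed.
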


\begin{proof}
    The intermediate value theorem follows from the definition of $h$,
    which jumps by one and records the midpoint value at each jump.
    The Markov property follows from the fact that the \emph{a priori} randomness
    comes from a Poisson point process, whose restrictions to disjoint Borel sets are independent.
\end{proof}

The independence of the underlying Poisson point process
can also be used to make sense of the conditional law following an exploration process.
This is described now.
First, define
\begin{equation}
    \calE_a := \{x\in\bar\Gamma:h(x)\neq a\};
    \qquad
    \calE_a(S) := S\cup \Big(\cup_{A\in\calC(\calE_a),\, A\cap S\neq\emptyset}A\Big).
\end{equation}

\begin{proposition}[Exploration process]
    \label{prop:ExplorationProcess}
    Consider the measure $\BM_{\Gamma,\beta}^\zeta$
    with boundary condition $\zeta$.
    Fix $a\in\Z$ and $S\subset\BC$.
    Set $\Gamma':=\Gamma\setminus\bar\calE_a(S)$.
    Then conditional on $\calE_a(S)$, the law of $h$ in $\bar\Gamma'$ is
    $(Z_{\Gamma',\beta}^{\zeta'})^{-1}\BM_{\Gamma',\beta}^{\zeta'}$,
    where $\zeta'\in\Z^{\partial\Gamma'}$ is the boundary condition induced by the exploration:
    it agrees with the original boundary condition on $\partial\Gamma'\cap\partial\Gamma$
    and is equal to $a$ on the newly created boundary $\partial\Gamma'\setminus\partial\Gamma$.
\end{proposition}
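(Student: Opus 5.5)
The plan is to reduce the statement to the Markov property (the preceding proposition) through a strong Markov argument for the random set $\calE_a(S)$. This mirrors the standard proof of the strong Markov property for the cable-system Gaussian free field.

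\emph{Step 1: identify $\calE_a(S)$ with a stopping set.} Let $D:=\bar\calE_a(S)$. For any closed $A\subset\bar\Gamma$ with $\partial\Gamma\subset A$, the event $\{D\subset A\}$ is measurable with respect to $h|_A$, hence with respect to the Poisson marks in $A$ and the boundary data. To see this, note that $D$ is obtained from $S$ by adding every connected component of $\{h\neq a\}$ that meets $S$, together with its closure. Such components are bounded by points where $h=a$ (intermediate value theorem), or by $\partial\Gamma$. So deciding $\{D\subset A\}$ amounts to checking that every component of $\{h\neq a\}\cap A$ meeting $S$ is closed off inside $A$ by points of height $a$, and this check uses only $h|_A$. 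Almost surely there are finitely many marks, so $D$ is a finite union of closed segments. Moreover $\partial\Gamma'\setminus\partial\Gamma$ consists of points where $h=a$: any point of $\partial D$ interior to $\Gamma$ is either a point with $h=a$ or an endpoint of a component of $\{h\neq a\}$, and the latter is again a point with $h=a$.

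\emph{Step 2: discretise $D$.} I would approximate $D$ from outside by a decreasing sequence of random closed sets $D_k$ taking values in a countable family $\calA_k$. For instance, let $D_k$ be the union of all closed dyadic sub-segments of mesh $2^{-k}$ of the cables that meet $D$. For each fixed $A\in\calA_k$, apply the Markov property on $\{D_k=A\}$. This event is determined by $h|_A$, up to the standard detail that $D\subset A$ and that $D$ meets every dyadic piece of $A$. Conditional on $h|_A$, the law on $\Gamma\setminus A$ is then $\bigotimes(Z^{h|_{\partial\Gamma'}})^{-1}\BM^{h|_{\partial\Gamma'}}$. Summing over $A$ gives the claim for $D_k$, with the induced boundary data $h|_{\partial(\Gamma\setminus D_k)}$.

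\emph{Step 3: pass to the limit.} As $k\to\infty$, $D_k\downarrow D$, and on each fixed compact subset $K\subset\Gamma'$ we eventually have $K\subset\Gamma\setminus D_k$. Almost surely no Poisson mark lies within $2^{-k}$ of $\partial D$ for large $k$, so the boundary values $h|_{\partial(\Gamma\setminus D_k)}$ near the new boundary equal $a$. Consequently, for a bounded test functional $F$ of $h$ restricted to a compact set $K\subset\Gamma'$ and a bounded functional $G$ of $D$, the identity
\[\BM^\zeta[F\,G]=\BM^\zeta\big[G\,\mathcal{M}_{\Gamma\setminus D_k}(F)\big]\]
passes to the limit. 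Here $\mathcal{M}$ denotes the normalised cable measure with the induced boundary data. The limit uses the fact that the normalised measures on $\Gamma\setminus D_k$ with boundary value $a$, restricted to $K$, converge to those on $\Gamma'$. This convergence comes from continuity of the Poisson process in the domain together with continuity of $Z^{\zeta'}$ in the domain.

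The main obstacle is Step 1 and the measurability and stopping-set bookkeeping that makes Step 2 exact. In particular, I need to check that $\{D_k=A\}$ depends only on marks in $A$ and not on unexplored marks just outside $D$. I would handle this by defining $D_k$ so that $A$ strictly contains a neighbourhood of $D$ and the exploration is certified from within $A$, which is legitimate because the marks are almost surely finitely many and away from the dyadic points.
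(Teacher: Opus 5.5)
Your proposal is correct and rests on the same idea as the paper's proof: the explored set $\bar\calE_a(S)$ is certified by information inside it, so the independence of the Poisson marks on disjoint sets (which you package as the deterministic Markov property) gives the conditional law $(Z_{\Gamma',\beta}^{\zeta'})^{-1}\BM_{\Gamma',\beta}^{\zeta'}$ on $\Gamma'$. The paper conditions directly on a realised exploration and invokes this independence, whereas your dyadic approximation $D_k\downarrow D$ and the limit make that strong-Markov step rigorous; one point to tidy is that the points of $\partial D$ bounding components of $\{h\neq a\}$ are themselves marks (where $h=a\pm\frac12$), so in Step~3 you should say that for large $k$ there are no marks in $D_k\setminus D$, and the boundary value $a$ is the limit of $h$ from the $\Gamma'$ side.
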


\begin{proof}
    For a realised exploration, write $A=\bar\calE_a(S)$.
    The explored set is determined by the Poisson marks in $A$ together with the
    information that the height is equal to $a$ on the inner boundary separating $A$ from
    $\Gamma':=\Gamma\setminus A$.
    Since the underlying Poisson point process has independent restrictions to the
    disjoint sets $A$ and $\Gamma'$, conditioning on this explored information does not
    change the law of the Poisson marks inside $\Gamma'$ except through the boundary
    values just described.
    On each connected component of $\Gamma'$, the remaining marks are therefore
    distributed as the original Poisson process conditioned to form a valid height
    function with boundary condition $\zeta'$.
    This is precisely the normalised measure
    $(Z_{\Gamma',\beta}^{\zeta'})^{-1}\BM_{\Gamma',\beta}^{\zeta'}$.
\end{proof}

We now want to extend the Kadanoff--Ceva construction to the more general continuum domains.
We call a continuum domain $\Gamma$ \emph{simply connected} if both $\Gamma$ and
$\BC\setminus\Gamma$ are connected,
and if $\Gamma$ contains at least one vertex of $(\Z+\frac12)^2$.
The dual graph $\Gamma^*$ of a simply connected domain $\Gamma$ is particularly simple:
it is the union of all primal squares in $\Z^2$ around all dual vertices in the set
$\Gamma\cap(\Z+\frac12)^2$ (Figure~\ref{fig:dual_graph}, \textsc{Right}).
This graph is connected and leafless, and it is a weighted graph: the weight $J_{xy}$ of
an edge $xy$ is $\operatorname{Leb}(\Gamma\cap L_{xy})$,
where $L_{xy}\subset\BC$ is the unit line segment perpendicular to the primal edge $xy$.

\begin{lemma}[Kadanoff--Ceva correspondence for continuum domains]
    \label{lem:kadanoff_ceva_specific}
    Let $\Gamma$ denote any simply connected continuum domain, and fix $\beta\in[0,\infty)$.
    Let $\Gamma^*\subset\Z^2$ denote the dual graph, and define $J$ as specified above.
    Then for any boundary height function $\zeta:\partial \Gamma\to\Z$,
    we have
    \begin{equation}
        \label{eq:kadanoff_ceva_specific}
        Z_{\Gamma,\beta}^\zeta =
        Z_{\Gamma^*, J\beta}
        \Big\langle \prod_{u\in\partial \Gamma^*}
        (\sigma_u)^{\zeta_{u_+}-\zeta_{u_-}}\Big\rangle_{\Gamma^*,J\beta},
    \end{equation}
    where $u_-$ and $u_+$ are the first points in $\partial\Gamma$ adjacent to $u$ in the
    clockwise and counterclockwise direction, respectively
    (cf. Figure~\ref{fig:dual_graph}, \textsc{Right}).
\end{lemma}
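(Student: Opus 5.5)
The plan is to reduce to the discrete Kadanoff--Ceva correspondence (Lemma~\ref{lem:kadanoff_ceva}) by integrating out the Poisson marks edge by edge, as in the proposition identifying the push-forward of $\BM_{\Gamma_n,\beta}^\zeta$. First I will describe the combinatorial structure of $\bar\Gamma$. Since $\Gamma$ is simply connected, it decomposes into cable segments. Some segments join two dual vertices of $\Gamma\cap(\Z+\frac12)^2$; these correspond to interior edges of $\Gamma^*$ (equivalently, inner dual edges). The remaining segments run from a dual vertex in $\Gamma$ to a boundary point of $\partial\Gamma$; these correspond to primal edges of $\Gamma^*$ incident to its outer face. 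Each point of $\partial\Gamma$ therefore plays the role of a boundary vertex of the dual graph $(\Gamma^*)^*$ in the sense of Definition~\ref{def:planar_duality}. I will check that $\Gamma^*$ is leafless and connected, and that its planar dual is the graph whose vertex set is $(\Gamma\cap(\Z+\frac12)^2)\cup\partial\Gamma$. The cyclic order of $\partial\Gamma$ then matches the clockwise/counterclockwise labelling $u_\pm$.

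Second, I will integrate out the marks. On a segment $L$ the marks are Poisson, and after multiplying by $e^{\beta\operatorname{Leb}(L)}$ the mass of a configuration with $i$ up-jumps and $j$ down-jumps is
\[
\frac{(\beta\operatorname{Leb}(L)/2)^i}{i!}\,\frac{(\beta\operatorname{Leb}(L)/2)^j}{j!}.
\]
Summing over all $i,j$ with $i-j$ equal to the height difference gives $w_{\beta J_{xy}}$, with $J_{xy}=\operatorname{Leb}(\Gamma\cap L_{xy})$. Segments not containing any dual vertex in their interior would require extra care, but simple connectivity together with the presence of a vertex rules out isolated segments. Degenerate pieces, such as boundary points on the same segment, simply merge. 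This shows that $Z_{\Gamma,\beta}^\zeta = Z_{(\Gamma^*)^*,\beta J}^\zeta$ in the sense of Definition~\ref{def:dual_height_function}.

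Third, I will apply Lemma~\ref{lem:kadanoff_ceva} to the weighted primal graph $(\Gamma^*,\beta J)$ with the boundary condition $\zeta$, which yields \eqref{eq:kadanoff_ceva_specific} directly.

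The main obstacle is the bookkeeping in the first step. The difficulty is verifying that the boundary points of $\Gamma$ correspond bijectively to the edges of $\Gamma^*$ on its outer face, with consistent orientation. A subtlety arises when a cable edge of $\Gamma^*$ has both ends leaving $\Gamma$ but its middle inside $\Gamma$, giving two boundary points on a single edge. If Definition~\ref{def:planar_duality} does not cover this directly, I will handle it by subdividing: an edge whose weight $w$ corresponds to two boundary vertices with a fixed height gap contributes a constant factor, which can be absorbed. Alternatively, I will exclude the case by showing that the definition of $\Gamma^*$ (squares around interior dual vertices) makes every cable piece of $\Gamma$ attached to some interior dual vertex.
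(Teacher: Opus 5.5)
Your proposal follows the paper's proof: integrate out the Poisson marks on each piece $\Gamma\cap L_{xy}$ to get the weights $w_{\beta J_{xy}}$, recognise $Z_{\Gamma,\beta}^\zeta$ as the discrete height-function partition function on the dual of $(\Gamma^*,\beta J)$, and apply Lemma~\ref{lem:kadanoff_ceva}; your bookkeeping of the boundary points is more detailed than the paper's.

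One small correction concerns floating cable pieces: of your two options, only exclusion is legitimate, because the identity is exact and a $\zeta$-dependent factor $w_{\beta\ell}(\zeta_c-\zeta_b)$ cannot be absorbed. Exclusion follows from connectivity of $\Gamma$. Likewise, a gap inside a segment joining two dual vertices of $\Gamma$ is ruled out by connectivity of $\BC\setminus\Gamma$, rather than the resulting boundary points ``merging''.
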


\begin{proof}
    First integrate out the positions of the Poisson marks on each set $\Gamma\cap L_{xy}$.
    If $i$ up jumps and $j$ down jumps occur there, their contribution to the
    non-normalised cable measure is
    \begin{equation}
        \frac{(\beta J_{xy}/2)^i}{i!}\frac{(\beta J_{xy}/2)^j}{j!},
    \end{equation}
    and the net height difference across the corresponding dual edge is $i-j$.
    Summing over all such pairs with fixed difference gives the discrete height weight
    $w_{\beta J_{xy}}$.
    Hence $Z_{\Gamma,\beta}^\zeta$ is the partition function of the discrete height
    function on the height graph associated with $\Gamma$, with coupling constants $\beta
    J$ and boundary condition $\zeta$.
    Applying Lemma~\ref{lem:kadanoff_ceva} to the weighted planar graph $\Gamma^*$ gives
    the stated identity, with the displayed sign convention for the clockwise and
    counterclockwise boundary neighbours.
\end{proof}

\section{Proof of the reverse Ginibre inequality (Theorem~\ref{thm:reverse})}
\label{sec:reverseinequality}

\subsection{Definition of a two-point function in arbitrary domains}

Now consider $n\in\Z_{\geq 1}$ large, and consider $\beta$ fixed.
We shall consider simply connected continuum domains $\Gamma$ with the property that
\begin{equation}\label{eq:cable_property}
    \BC\cap([-1,1]\times(-n-\tfrac12,n+\tfrac12))\subset
    \Gamma\subset\BC\cap(\R\times(-n-\tfrac12,n+\tfrac12)).
\end{equation}
For any such domain $\Gamma$, we shall write ``$00$'' for the boundary height function
that is identically zero on $\partial\Gamma$,
and ``$01$'' for the boundary height function that is zero on the left of the vertical
axis $\{0\}\times\R$ and one on the right of that axis.
Let $u:=(0,n)$ and $v:=(0,-n)$.
Then by the Kadanoff--Ceva correspondence (Lemma~\ref{lem:kadanoff_ceva_specific}), we have
\begin{equation}
    \label{eq:two_point_cable_system}
    \frac{Z_{\Gamma,\beta}^{01}}{Z_{\Gamma,\beta}^{00}}=
    \frac{
        \BM_{\Gamma,\beta}[\{h|_{\partial\Gamma}=01\}]
    }{
        \BM_{\Gamma,\beta}[\{h|_{\partial\Gamma}=00\}]
    }
    =
    \langle \sigma_u\bar\sigma_v\rangle_{\Gamma^*,J(\Gamma)\beta}
    =:\Xi(\Gamma).
\end{equation}
This identity holds for any such simply connected continuum domain $\Gamma$.
By the Ginibre inequality, $\Xi$ is an increasing function of $\Gamma$.
The idea of this section is to use results from percolation theory (and in particular the
Russo--Seymour--Welsh theory) to
bound the sensitivity of $\Xi$ to changes in $\Gamma$.

\subsection{Random boundary conditions}

Fix an appropriate domain $\Gamma$.
Consider the probability measure
\begin{equation}
    \nu_{\Gamma}:=\BM_{\Gamma,\beta}[\blank|\{h|_{\partial\Gamma}\in\{00,01\}\}].
\end{equation}
This means that $\nu_{\Gamma}$ is obtained by sampling $\Pi_+$ and $\Pi_-$ from a Poisson
point process on $\Gamma$,
and then conditioning on the event that $h$ is a valid height function with boundary
condition \emph{either} $00$ or $01$.
The boundary condition is thus random, and we write $a$ for the $\{0,1\}$-valued random
variable encoding the height of the boundary on the right of the $y$-axis.
This probability measure encodes the information necessary to determine the two-point
function of Equation~\eqref{eq:two_point_cable_system}, since
\begin{equation}
    \label{eq:Xi_new}
    \Xi(\Gamma) = \frac{\nu_\Gamma[\{a=1\}]}{\nu_\Gamma[\{a=0\}]}.
\end{equation}

Recall that $\Xi(\blank)$ is only defined for (particular) simply connected domains.
For any continuum domain $\Omega$ satisfying \eqref{eq:cable_property}, write $D(\Omega)$
for the connected component of $\Omega$ carrying the two boundary jumps at $u$ and $v$,
whenever this component is simply connected.
We shall only apply this notation in that case.

\subsection{Sensitivity bound in terms of percolation}

For any $r\in\R$, introduce the half-planes $\H^-_r:=(-\infty,r)\times\R$ and
$\H^+_r:=(r,\infty)\times\R$.
For $\zeta\in\{00,01\}$, write
\begin{equation}
    \tilde\mu_{\Gamma,\beta}^{\zeta}:=
    \frac{1}{Z_{\Gamma,\beta}^{\zeta}}\BM_{\Gamma,\beta}^{\zeta}
\end{equation}
for the corresponding normalised measure.

\begin{lemma}
    \label{lem:exploration_bound_ratio_bound}
    Fix $n$, $\beta$, $\Gamma$, and $\nu_\Gamma$ as in the previous two subsections.
    Fix some arbitrary $r<-1$.
    Then
    \begin{equation}
        \label{eq:percolationbound}
        \frac{\Xi(D(\Gamma\cap\H^+_{r}))}{\Xi(\Gamma)} \geq
        \tilde\mu_{\Gamma,\beta}^{01}[\{\calE_0(\H^-_{r})\subset \H^-_{-1}\}]
        ,
    \end{equation}
    where $\calE_0(\blank)$ is the exploration set defined in
    Proposition~\ref{prop:ExplorationProcess}.
\end{lemma}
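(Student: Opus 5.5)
The idea is to write the event on the right-hand side of \eqref{eq:percolationbound} as the outcome of an exploration, and to compare the non-normalised masses under the two boundary conditions $01$ and $00$ exploration by exploration. Write $A:=\{\calE_0(\H^-_r)\subset\H^-_{-1}\}$. Multiplying by $Z^{01}_{\Gamma,\beta}$ and recalling $\Xi(\Gamma)=Z^{01}_{\Gamma,\beta}/Z^{00}_{\Gamma,\beta}$ from \eqref{eq:two_point_cable_system}, the claim is equivalent to
\begin{equation*}
    \BM^{01}_{\Gamma,\beta}[A]\leq \Xi\big(D(\Gamma\cap\H^+_r)\big)\, Z^{00}_{\Gamma,\beta}.
\end{equation*}
This is the inequality I will prove.

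\emph{Step 1: decomposition of $\BM^{01}_{\Gamma,\beta}[A]$.} The event $A$ is measurable with respect to the explored set $E:=\calE_0(\H^-_r)$. On $A$, the closure $\bar E$ lies in $\bar\H^-_{-1}$. There the boundary condition $01$ equals $0$, so the explored configuration only ever sees boundary value $0$. By Proposition~\ref{prop:ExplorationProcess}, and by the Poisson independence behind it, we may disintegrate over explorations:
\begin{equation*}
    \BM^{01}_{\Gamma,\beta}[A]=\int_{A} w(\diff E)\, Z^{01'}_{\Gamma'(E),\beta}.
\end{equation*}
Here $\Gamma'(E)=\Gamma\setminus\bar E$, and $w$ is the non-normalised weight of the Poisson marks inside $\bar E$ that are compatible with the explored picture (height $0$ on the new boundary). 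The boundary condition $01'$ agrees with $01$ on $\partial\Gamma\cap\partial\Gamma'$ and equals $0$ on the new boundary. The crucial observation is that the same pair $(E,w)$ also appears in the analogous decomposition of $\BM^{00}_{\Gamma,\beta}$. Indeed, the explored marks and the height on $\bar E$ are determined only by the boundary values on the left, which coincide for $00$ and $01$. The remainder then has boundary condition $00'$, which is identically zero on $\partial\Gamma'$. Since the event $A$ need not exhaust $\BM^{00}$, we get
\begin{equation*}
    Z^{00}_{\Gamma,\beta}\geq \int_A w(\diff E)\, Z^{00'}_{\Gamma'(E),\beta}.
\end{equation*}

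\emph{Step 2: factorisation over components and monotonicity.} By the Markov property, $Z^{\zeta'}_{\Gamma'}$ factorises over the connected components of $\Gamma'$. On every component except $D(\Gamma')$, the boundary conditions $01'$ and $00'$ agree (both are $0$ or both are $1$ up to a constant, which does not matter), because $\bar E$ stays left of $-1$ while the two jumps at $u,v$ lie on the axis $\{0\}\times\R$. Hence
\begin{equation*}
    Z^{01'}_{\Gamma'}=\Xi\big(D(\Gamma')\big)\,Z^{00'}_{\Gamma'}.
\end{equation*}
Since $E\supset\H^-_r\cap\Gamma$, we have $\Gamma'\subset\Gamma\cap\H^+_r$, so $D(\Gamma')\subset D(\Gamma\cap\H^+_r)$. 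Both domains contain the strip around the axis in \eqref{eq:cable_property}, since on $A$ that strip is untouched. By the Ginibre inequality ($\Xi$ is increasing), $\Xi(D(\Gamma'))\leq\Xi(D(\Gamma\cap\H^+_r))$. Inserting this into Step 1 gives
\begin{equation*}
    \BM^{01}_{\Gamma,\beta}[A]\leq \Xi\big(D(\Gamma\cap\H^+_r)\big)\int_A w(\diff E)\,Z^{00'}_{\Gamma'}\leq \Xi\big(D(\Gamma\cap\H^+_r)\big)\,Z^{00}_{\Gamma,\beta},
\end{equation*}
which is the claim.

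\emph{Main obstacle.} The delicate point is justifying that $D(\Gamma')$ is simply connected, so that $\Xi(D(\Gamma'))$ is defined and Lemma~\ref{lem:kadanoff_ceva_specific} applies. I would argue this as follows. Every component of $E$ is connected to $\H^-_r\cap\bar\Gamma$, hence to the left part of $\BC\setminus\Gamma$, so removing $\bar E$ creates no holes in the component containing the axis. A related subtlety is the rigorous form of the disintegration in Step 1. I would prove it by discretising the exploration: run it through finitely many stopping sets, or explore cable edge by cable edge, and apply Proposition~\ref{prop:ExplorationProcess} at each stage. Throughout, the symmetric roles of $00$ and $01$ on the left half-plane $\H^-_{-1}$ are what make the comparison work.
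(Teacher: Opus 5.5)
Your proposal is correct and follows essentially the same route as the paper. The paper also reduces to the stronger estimate in which $\nu_\Gamma[\{a=0\}]$ is replaced by $\nu_\Gamma[A\cap\{a=0\}]$, which corresponds to your discarding of $\BM^{00}_{\Gamma,\beta}[A^c]$. It then conditions on the exploration shape $\calA$, identifies the conditional ratio as $\Xi(D(\Gamma\setminus\calA))$ because the other components carry identical boundary data, and concludes by Ginibre monotonicity. Your non-normalised bookkeeping makes explicit a point the paper leaves implicit, namely that a given exploration has the same weight under $00$ and $01$, and your connectivity argument for $D(\Gamma')$ fills in a detail the paper takes for granted.
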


\begin{proof}
    Under $\nu_\Gamma$, conditioning on $\{a=1\}$ gives the normalised measure
    $\tilde\mu_{\Gamma,\beta}^{01}$.
    By Equation~\eqref{eq:Xi_new}, it is enough to prove the stronger estimate
    \begin{equation}
        \label{eq_exploration_bound_ratio_bound_1}
        \Xi(D(\Gamma\cap\H^+_{r})) \geq
        \frac{\nu_\Gamma[\{\calE_0(\H^-_{r})\subset
        \H^-_{-1}\}\cap\{a=1\}]}{\nu_\Gamma[\{\calE_0(\H^-_{r})\subset \H^-_{-1}\}\cap\{a=0\}]}.
    \end{equation}
    We claim that it suffices to prove that, for any
    $\H^-_{r}\cap\Gamma\subset\calA\subset \H^-_{-1}$,
    we have
    \begin{equation}
        \label{eq_exploration_bound_ratio_bound_2}
        \Xi(D(\Gamma\cap\H^+_{r})) \geq
        \frac{\nu_\Gamma\big[\{a=1\}\big|\{\calE_0(\H^-_{r})=\calA\}\big]}{\nu_\Gamma\big[\{a=0\}\big|\{\calE_0(\H^-_{r})=\calA\}\big]}.
    \end{equation}
    Indeed, Equation~\eqref{eq_exploration_bound_ratio_bound_1} then follows by integrating
    the numerator and denominator of the right-hand side of
    Equation~\eqref{eq_exploration_bound_ratio_bound_2} over $\calA$.
    We therefore focus on proving Equation~\eqref{eq_exploration_bound_ratio_bound_2}.

    Let us focus on the conditional measure $\nu_\Gamma[\blank|\{\calE_0(\H^-_{r})=\calA\}]$.
    This measure comes \emph{a priori} from a measure of independent Poisson jumps,
    conditioned on:
    \begin{itemize}
        \item The jumps forming a consistent height function with boundary condition $00$ or $01$,
        \item The height function equals zero on $\partial(\calA\cap\Gamma)$,
        \item Some event measurable in terms of what happens on the interior of $\calA$.
    \end{itemize}
    By independence of the Poisson point process, conditioning on the zeros on the
    boundary of $\calA\cap\Gamma$ makes
    its behaviour on $\calA$ independent from its behaviour on $\Gamma\setminus\calA$,
    and in particular the distribution of $h|_{\Gamma\setminus\calA}$ is given by
    $\nu_{\Gamma\setminus\calA}$.
    All components other than $D(\Gamma\setminus\calA)$ have constant boundary condition
    whether $a=0$ or $a=1$, and
    therefore cancel from the ratio below.
    The ratio on the right-hand side in
    Equation~\eqref{eq_exploration_bound_ratio_bound_2} therefore equals
    \begin{equation}
        \Xi(D(\Gamma\setminus\calA)).
    \end{equation}
    Since $D(\Gamma\setminus\calA)\subset D(\Gamma\cap\H^+_r)$ and $\Xi$ is increasing in
    the domain,
    this is indeed smaller than $\Xi(D(\Gamma\cap\H^+_r))$.
\end{proof}

\subsection{Application of the pushing lemma (Russo--Seymour--Welsh theory)}

\begin{proof}[Proof of Theorem~\ref{thm:reverse}]
    Let $r\in\Z_{\geq2}$ with $r\leq 2n$, and set
    \begin{equation}
        \Gamma:=\BC\cap((-2n-\tfrac12,2n+\tfrac12)\times(-n-\tfrac12,n+\tfrac12)),
        \qquad
        r_-:=-r-\tfrac12.
    \end{equation}
    The right-hand side of Equation~\eqref{eq:percolationbound}
    encodes the probability that the zeros vertically
    cross the rectangle $[r_-,-1]\times[-n,n]$.
    Exactly this probability was studied in~\cite[Pushing lemma
    (Lemma~7.3)]{Lammers_2023_DichotomyTheoryHeight},
    where it is proved that
    \begin{equation}
        \tilde\mu_{\Gamma,\beta}^{01}[\{\calE_0(\H^-_{r_-})\subset \H^-_{-1}\}]
        \geq c^{n/r}
    \end{equation}
    for some universal constant $c>0$. As $D(\Gamma\cap\H^+_{r_-})=\Gamma\cap\H^+_{r_-}$,
    Lemma~\ref{lem:exploration_bound_ratio_bound} implies that
    \begin{equation}
        \frac
        {\langle \bar{\sigma}_u\sigma_v\rangle_{(\Gamma\cap\H^+_{r_-})^*,J(\Gamma\cap
        \H^+_{r_-})\beta}}
        {\langle \bar\sigma_u\sigma_v\rangle_{\Gamma^*,J(\Gamma)\beta}}
        \geq c^{n/r}.
    \end{equation}
    By symmetry we may apply the same inequality on the intersection with $\H^-_{r}$, yielding
    \begin{equation}
        \frac
        {\langle \bar\sigma_u\sigma_v\rangle_{(\Gamma')^*,J(\Gamma')\beta}}
        {\langle
        \bar\sigma_u\sigma_v\rangle_{(\Gamma\cap\H^+_{r_-})^*,J(\Gamma\cap\H^+_{r_-})\beta}}
        \geq c^{n/r},
    \end{equation}
    where $\Gamma':=\Gamma\cap((-r-\tfrac12,r+\tfrac12)\times\R)$.
    Multiplying the two inequalities gives
    \begin{equation}
        \frac
        {\langle \bar\sigma_u\sigma_v\rangle_{(\Gamma')^*,J(\Gamma')\beta}}
        {\langle \bar\sigma_u\sigma_v\rangle_{\Gamma^*,J(\Gamma)\beta}}
        \geq c^{2n/r}.
    \end{equation}
    Since $\Gamma^*=\Lambda_{2n,n}$ and $(\Gamma')^*=\Lambda_{r,n}$, this is the desired
    inequality with $\crev:=c^2$.
\end{proof}

\begin{remark}
    When $r$ is proportional to $n$, the term $(c)^{2n/|r|}$ is
    constant and the two-point functions for $\Lambda_{2n,n}$ and $\Lambda_{r,n}$ are of
    the same order. However, when $r$ is constant as $n\to\infty$, we instead have a
    factor $e^{-n}$, which decays exponentially, like the full-plane two-point function
    in the massive regime.
\end{remark}

\section{Proof of the multipoint Ginibre bound (Equation~\eqref{eq:multi_point_ginibre})}
\label{sec:the_technical_step}

As announced before, the purpose of this section is to find a universal constant $c>0$ such that
Equation~\eqref{eq:multi_point_ginibre} holds true with $f(n,s)=c^{n^2s^2}$.

\subsection{Proof overview}

The proof has similarities with Section~\ref{sec:reverseinequality}.
We define an analogue $\Xi'$ of $\Xi$ but, for the boundary conditions corresponding to
the slope $s$,
rather than the boundary condition $01$ of Section~\ref{sec:reverseinequality}.
In the Kadanoff--Ceva representation, $\Xi'(\Gamma)$ is a multi-point correlation function,
which means that the Ginibre inequality does not apply directly.
However, the Ginibre inequality \emph{does} apply to $\Xi'(\Gamma)$
when $\Gamma$ can be decomposed into connected components,
such that each component contains at most one source and one sink of the correlation function,
so that the multi-point correlation function factorises into a product of two-point functions.

Subsection~\ref{subsec:adapting_notation} adapts the notations
from Section~\ref{sec:reverseinequality} to the case of multiple boundary jumps, and
defines the function $\Xi'$.
Subsection~\ref{subsec:sensitivity_bound} establishes a sensitivity bound for $\Xi'$,
which is a version of Lemma~\ref{lem:exploration_bound_ratio_bound} adapted to the case
of multiple boundary jumps. This sensitivity bound allows us to compare $\Xi'(\Gamma)$ to
the product of $\Xi'$ on smaller domains, each containing at most one boundary jump, and
thus to apply the Ginibre inequality on those smaller domains.
The ratio is expressed in terms of the probability of a certain percolation event, and we
control this probability in Subsection~\ref{subsec:perco}.
Finally, Subsection~\ref{subsec:the_end} puts everything together to obtain the desired inequality.

Throughout this entire section, we fix the values of
$n$ (large), $\beta\in[0,\infty)$, and $s\in(0,1/1000)$.

\subsection{Random boundary conditions and the function $\Xi'$}
\label{subsec:adapting_notation}

Enumerate the elements of the jump positions $I_s\cap[-n,n]$ as $(x_i)_{0\leq i\leq N}$
for some $N\geq 0$.
By analogy with Equation~\eqref{eq:cable_property},
we consider domains $\Gamma$ satisfying
\begin{equation}\label{eq:cable_prop_1}
    \Gamma\subset\BC\cap(\R\times(-n-\tfrac12,n+\tfrac12)),
\end{equation}
and, for each $i$,
\begin{equation}\label{eq:cable_prop_2}
    \BC\cap([x_i-1,x_i+1]\times(-n-\tfrac12,n+\tfrac12))\subset
    \Gamma.
\end{equation}

Following Section~\ref{sec:reverseinequality}, we define a random
boundary condition measure on these domains. We write ``$0$'' for
the boundary height function that is identically zero on $\partial\Gamma$, and
``$s$'' for the boundary height function corresponding to a slope $s$. We then
define the probability measure
\begin{equation}
    \nu_{\Gamma}':=\BM_{\Gamma,\beta}[\blank|\{h|_{\partial\Gamma}\in\{0,s\}\}],
\end{equation}
and we write $b$ for the $\{0,s\}$-valued random variable that encodes the new
height of the boundary (notice that the boundary condition $s$ applies
makes sense on \emph{any} continuum domain).
A \emph{good} domain $\Gamma$ is a domain
which satisfies Equation~\ref{eq:cable_prop_1} and Equation~\eqref{eq:cable_prop_2}, and
whose connected components are simply connected.
For any good domain $\Gamma$, we define
\begin{equation}\label{eq:multiple_point_cable_system}\Xi'(\Gamma)\coloneqq\Big\langle
    \Big[
        \prod\nolimits_{u\in T_n(s)} \bar\sigma_u
    \Big]\Big[
        \prod\nolimits_{v\in B_n(s)}\sigma_v
    \Big]
    \Big\rangle_{\Gamma^*,J(\Gamma)\beta}=
    \frac{
        \BM_{\Gamma,\beta}[\{h|_{\partial\Gamma}=s\}]
    }{
        \BM_{\Gamma,\beta}[\{h|_{\partial\Gamma}=0\}]
    }=\frac{\nu'_{\Gamma}[\{b=s\}]}{\nu'_{\Gamma}[\{b=0\}]}.
\end{equation}
Here we write $\langle\blank\rangle_{\Gamma^*,J(\Gamma)\beta}$ for the product
of $\langle\blank\rangle_{Q^*,J(Q)\beta}$ over the connected components $Q$ of $\Gamma$,
and we used the Kadanoff--Ceva correspondence (Lemma~\ref{lem:kadanoff_ceva_specific}) to
each such connected component.

\begin{remark}
    \label{rem:ginibre_applies}
    We stress that unlike $\Xi$, the function $\Xi'$ is not known to be
    increasing. However, if $\Gamma$ contains $|T_n(s)|$ connected components
    such that each of those components contains exactly one source and one sink of the
    correlation function, then $\Xi'(\Gamma)$ factorises into a product of two-point
    functions, and the Ginibre inequality applies to each of those two-point functions.
    This means that $\Xi'$ is increasing over the set of domains $\Gamma$ satisfying this
    additional decomposition requirement.
\end{remark}

\subsection{Another sensitivity bound}
\label{subsec:sensitivity_bound}

\newcommand\LINE{\mathfrak{L}}
\newcommand\STRIP{\calS}

Recall that the jump positions are enumerated via $I_s\cap[-n,n]=\{x_0,\dots,x_{N}\}$ for
some $N\geq 0$.
Write $y_i:=\frac{x_i+x_{i+1}}{2}$ for $0\leq i\leq N-1$.
For any $a\in\R$,
introduce the \emph{line} $\LINE(a):=\{a\}\times\R$
and the \emph{strip} $\STRIP(a):=[a-\frac{1}{20s},a+\frac{1}{20s}]\times\R$.
Write
\begin{equation}
    \STRIP_y := \cup_i \STRIP(y_i);
    \qquad
    \LINE_y := \cup_i \LINE(y_i).
\end{equation}

\begin{lemma}\label{lem:reverse_exploration_bound}
    For any good domain $\Gamma$, we have
    \begin{equation}\label{eq:reverse_exploration_bound}
        \frac{\Xi'(\Gamma)}{\Xi'(\Gamma\setminus
        \STRIP_y)}\geq \tilde\mu^0_{\Gamma,\beta}[\{
        \calE_0(\LINE_y)\subset \STRIP_y\}].
    \end{equation}
    Recall that $\calE_0(\blank)$ denotes the union of the connected components of
    $\{h=0\}$ intersecting the given set.
\end{lemma}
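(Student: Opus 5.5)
The plan is to compare the two partition functions $Z^s_{\Gamma,\beta}$ and $Z^0_{\Gamma,\beta}$ directly, by decomposing both according to the zero cluster explored from the lines. We run this exploration under the \emph{zero} boundary condition and then transplant it into the slope-$s$ boundary condition. As in Lemma~\ref{lem:exploration_bound_ratio_bound}, we have $\Xi'(\Gamma)=Z^s_{\Gamma,\beta}/Z^0_{\Gamma,\beta}$. It therefore suffices to show
\begin{equation*}
    Z^s_{\Gamma,\beta}\geq \Xi'(\Gamma\setminus\STRIP_y)\cdot\BM^0_{\Gamma,\beta}[\{\calE_0(\LINE_y)\subset\STRIP_y\}].
\end{equation*}

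\textbf{Step 1: decomposing the zero-boundary side.} Under $\BM^0_{\Gamma,\beta}$, disintegrate the event $\{\calE_0(\LINE_y)\subset\STRIP_y\}$ over the realised explored set $\calA=\bar\calE_0(\LINE_y)$ with $\calA\subset\STRIP_y$. By Proposition~\ref{prop:ExplorationProcess} and the independence of the Poisson marks, the mass of $\{\bar\calE_0(\LINE_y)=\calA\}$ factorises as $w(\calA)\cdot Z^{0}_{\Gamma\setminus\calA,\beta}$. Here $w(\calA)$ is the weight of the marks inside $\calA$ compatible with $h=0$ on its frontier, and the induced boundary condition on $\Gamma\setminus\calA$ is identically $0$.

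\textbf{Step 2: transplanting to the slope boundary.} The strips $\STRIP(y_i)$ have half-width $1/(20s)$, while consecutive jumps are at distance of order $1/s$. Hence each strip avoids the columns $[x_j-1,x_j+1]$, and the slope boundary condition is equal to a constant $k_i$ on $\partial\Gamma\cap\STRIP(y_i)$.

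Given a configuration on $\calA$ as in Step 1, shift the heights of the part of $\calA$ inside $\STRIP(y_i)$ by $k_i$. The marks are unchanged, so the weight $w(\calA)$ is preserved, and the result is consistent with boundary condition $s$ on $\partial\Gamma\cap\calA$. Combining with the Markov property, we obtain
\begin{equation*}
    Z^s_{\Gamma,\beta}\geq\sum_{\calA} w(\calA)\, Z^{s'}_{\Gamma\setminus\calA,\beta},
\end{equation*}
where the sum is understood as an integral over realised explored sets. Here $s'$ agrees with the slope boundary on $\partial\Gamma$ and equals $k_i$ on the new boundary inside $\STRIP(y_i)$.

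\textbf{Step 3: identifying the ratio.} Since $\calA\supset\Gamma\cap\LINE_y$, each component of $\Gamma\setminus\calA$ lies between two consecutive lines. On such a component, $s'$ coincides with the slope boundary up to an additive constant, which does not affect partition functions. Therefore
\begin{equation*}
    \frac{Z^{s'}_{\Gamma\setminus\calA,\beta}}{Z^0_{\Gamma\setminus\calA,\beta}}=\Xi'(\Gamma\setminus\calA),
\end{equation*}
where components carrying no boundary jump cancel. Each component carrying jumps contains exactly one source and one sink (the column at $x_j$), so by Remark~\ref{rem:ginibre_applies} the Ginibre inequality applies. Since $\Gamma\setminus\calA\supset\Gamma\setminus\STRIP_y$, we get $\Xi'(\Gamma\setminus\calA)\geq\Xi'(\Gamma\setminus\STRIP_y)$. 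Substituting into Step 2 and summing the Step 1 decomposition yields the claim.

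\textbf{Main obstacle.} I expect the delicate point to be the domain bookkeeping behind Step 3. Components of $\Gamma\setminus\calA$ must be simply connected, so that Lemma~\ref{lem:kadanoff_ceva_specific} applies and the factorised Ginibre monotonicity makes sense. The relevant component containing the column at $x_j$ must also contain the corresponding component of $\Gamma\setminus\STRIP_y$.

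I would handle regions enclosed by $\calA$ by assigning them to separate components with constant boundary value; these cancel from the ratio. Every component of $\calA$ meets a line, and the lines cross $\Gamma$ from top to bottom, so the jump-carrying component cannot have holes. A secondary technical point is making the disintegration over the random closed set $\calA$ rigorous, but this is the same as in the proof of Lemma~\ref{lem:exploration_bound_ratio_bound}.
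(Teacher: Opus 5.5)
Your proposal is correct and follows essentially the same route as the paper: decompose over the explored set $\calA\subset\STRIP_y$, match the zero-boundary and slope-boundary configurations inside $\calA$ by shifting heights by the constant slope value in each strip, identify the resulting conditional ratio as $\Xi'(\Gamma\setminus\calA)$, apply the componentwise Ginibre inequality to get $\Xi'(\Gamma\setminus\calA)\geq\Xi'(\Gamma\setminus\STRIP_y)$, and average over $\calA$. The only differences are presentational: you work with the unnormalised masses $\BM$ rather than the random-boundary measure $\nu'_\Gamma$, and you spell out two things the paper leaves implicit. These are the shift invariance of the weight inside $\calA$, and the component bookkeeping (jump-free components cancel, and jump-carrying components have no holes).
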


\begin{proof}
    First observe that
    \begin{multline}
        \label{eq:reverse_exploration_first_step}
        \frac{\Xi'(\Gamma)}{\tilde\mu^0_{\Gamma,\beta}[\{
        \calE_0(\LINE_y)\subset \STRIP_y\}]}
        =
        \frac{\nu'_\Gamma[\{b=s\}]}{\nu'_\Gamma[\{b=0\}]}
        \cdot
        \frac1{ \tilde\mu^0_{\Gamma,\beta}[\{
        \calE_0(\LINE_y)\subset \STRIP_y\}]}
        \\
        =
        \frac{\nu'_\Gamma[\{b=s\}]}{\nu'_\Gamma[\{b=0\}\cap \{
        \calE_0(\LINE_y)\subset \STRIP_y\}]}
        \geq
        \frac{\nu'_\Gamma[\{b=s\}\cap(\bigcap_{i=1}^{N-1}
        \{\calE_{a_i}(\LINE(y_i))\subset\STRIP(y_i)\})]}{\nu'_\Gamma[\{b=0\}\cap \{
        \calE_0(\LINE_y)\subset \STRIP_y\}]},
    \end{multline}
    where $a_i$ is the value of the boundary condition $s$ between the two consecutive
    jumps $x_i$ and $x_{i+1}$.
    It suffices to prove that this last expression is at least $\Xi'(\Gamma\setminus\STRIP_y)$.

    Now write $a_i(b):=a_i\cdot \true{b=s}$.
    This is the precisely the boundary condition between the jumps
    $x_i$ and $x_{i+1}$, for the boundary height function $b\in\{0,s\}$.
    The last ratio in Equation~\eqref{eq:reverse_exploration_first_step}
    is now equal to
    \begin{equation}
        \frac{
            \nu'_\Gamma\Big[\{b=s\}\Big|\bigcap_{i=1}^{N-1}
            \{\calE_{a_i(b)}(\LINE(y_i))\subset\STRIP(y_i)\}\Big]
        }{
            \nu'_\Gamma\Big[\{b=0\}\Big|\bigcap_{i=1}^{N-1}
            \{\calE_{a_i(b)}(\LINE(y_i))\subset\STRIP(y_i)\}\Big]
        }=:R.
    \end{equation}
    Again, it suffices to prove that this is at least $\Xi'(\Gamma\setminus\STRIP_y)$.

    If we condition on the precise shape $\{\cup_i \calE_{a_i(b)}(\LINE(y_i))=\calA\}$,
    then it is easy to calculate the ratio of the conditional probabilities;
    we have
    \begin{equation}
        \frac{
            \nu'_\Gamma\Big[\{b=s\}\Big|\{\cup_i \calE_{a_i(b)}(\LINE(y_i))=\calA\}\Big]
        }{
            \nu'_\Gamma\Big[\{b=0\}\Big|\{\cup_i \calE_{a_i(b)}(\LINE(y_i))=\calA\}\Big]
        }
        =
        \Xi'(\Gamma\setminus\calA).
    \end{equation}
    Moreover, since any such exploration disconnects the domain into components
    containing at most one source and one sink, the Ginibre inequality applies to each of
    those components (Remark~\ref{rem:ginibre_applies}).
    In particular, if $\calA\subset\STRIP_y$,
    then this ratio is at least $\Xi'(\Gamma\setminus\STRIP_y)$.
    We now obtain the desired inequality $R\geq \Xi'(\Gamma\setminus\STRIP_y)$
    by averaging over all possible shapes $\calA$.
\end{proof}

\subsection{Percolation with boundary zero boundary height}
\label{subsec:perco}
Next, we study the right-hand side of Lemma~\ref{lem:reverse_exploration_bound}.

\begin{lemma}\label{lem:path_existence}
    The right-hand side of Equation~\eqref{eq:reverse_exploration_bound}
    is at least $(\cfav)^{1000 (ns)^2}$,
    where $\cfav\in(0,1)$ is some universal constant.
\end{lemma}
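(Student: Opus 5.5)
The strategy is to split the event $\{\calE_0(\LINE_y)\subset\STRIP_y\}$ under $\tilde\mu^0_{\Gamma,\beta}$ into a bounded number of local crossing events per strip. We then lower bound the probability of their intersection using the FKG inequality for the absolute value of the height function~\cite{LammersOtt_2024_DelocalisationAbsolutevalueFKGSolidonsolid}, together with the pushing lemma~\cite[Lemma~7.3]{Lammers_2023_DichotomyTheoryHeight} exactly as in the proof of Theorem~\ref{thm:reverse}.

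First observe what suffices for a single strip. The component of $\{h=0\}$ through $\LINE(y_i)$ stays inside $\STRIP(y_i)$ as soon as there are two top-to-bottom crossings of $\Gamma\cap\STRIP(y_i)$ by $\{h\neq 0\}$: one in the left half $[y_i-\frac1{20s},y_i]\times\R$ and one in the right half. By the intermediate value theorem on the cable system, such a crossing separates $\LINE(y_i)$ from the complement of the strip within $\{h=0\}$. Since the boundary condition is $0$, these are events of the form ``$|h|\geq 1$ crosses vertically a rectangle of width $w:=\frac1{40s}$ and height $2n+1$''. They are increasing in $|h|$, so by absolute-value FKG the probability of the intersection over all $i$ and both halves is at least the product of the individual probabilities. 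There are at most $2N\leq 2(2ns+O(1))$ such events.

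It therefore remains to show that each rectangle crossing has probability at least $c^{n/w}=c^{40ns}$, with $c$ universal and uniform in the good domain $\Gamma$. For this we invoke the pushing lemma in the form used in Section~\ref{sec:reverseinequality}. Under zero boundary conditions, the probability that $\{|h|\geq1\}$ crosses vertically a $w\times 2n$ rectangle is bounded below by $c^{n/w}$. The input there is that a vertical crossing of a rectangle of aspect ratio $O(1)$ has probability bounded below, and one then glues about $n/w$ such crossings with FKG. Monotonicity in the domain is handled by exploration (Proposition~\ref{prop:ExplorationProcess}): any portion of $\Gamma$ outside the rectangle only introduces further boundary at height $0$. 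This is compatible with the comparison in~\cite{Lammers_2023_DichotomyTheoryHeight}, since for $|h|$ the zero boundary is the least favourable one and the estimate is stated for it. Combining the pieces gives
\begin{equation*}
    \tilde\mu^0_{\Gamma,\beta}[\{\calE_0(\LINE_y)\subset\STRIP_y\}]\geq (c^{40ns})^{4ns+O(1)}\geq \cfav^{1000(ns)^2},
\end{equation*}
where the additive $O(1)$ is absorbed because $n$ is large and $ns\geq 1$ may be assumed (otherwise $N$ is bounded and the bound is trivial after adjusting $\cfav$).

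The main obstacle is the single-strip estimate. One must check that the pushing lemma, proven in~\cite{Lammers_2023_DichotomyTheoryHeight} for lattice rectangles, transfers to strips sitting inside an arbitrary good domain $\Gamma$, whose boundary may cut the rectangle's top and bottom. One must also check that the crossing is by $\{h\neq0\}$ in the correct (strict-separation) sense on the cable graph. My first attempt would be to work with the event $\{|h|\geq 1\}$ crossing, use the domain Markov property to reduce to $\Gamma\cap\STRIP(y_i)$ with zero boundary data, and appeal to the stochastic monotonicity of $|h|$ in the domain under zero boundary conditions.
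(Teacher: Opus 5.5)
There is a genuine gap, and it comes from the event you are bounding. You take $\calE_0(\LINE(y_i))$ to be the cluster of $\{h=0\}$ through the line, following the ``Recall'' sentence in Lemma~\ref{lem:reverse_exploration_bound}. That sentence is a slip. The definition before Proposition~\ref{prop:ExplorationProcess} makes $\calE_0(S)$ the set $S$ together with the clusters of $\{h\neq 0\}$ meeting $S$. The proof of Lemma~\ref{lem:reverse_exploration_bound} needs this version, because the explored set must have boundary value $a_i(b)$ for the conditional ratio to equal $\Xi'(\Gamma\setminus\calA)$.

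Under your reading the lemma is actually false. At $\beta=0$, which is allowed in this section, $h\equiv 0$, so the zero cluster of $\LINE(y_i)$ is all of $\bar\Gamma_n$ and the probability is $0$; it also tends to $0$ as $\beta\downarrow 0$, so no $\beta$-uniform bound can hold. The step that fails is your single-crossing estimate. Under zero boundary conditions, $\{|h|\geq 1\}$ does \emph{not} cross a $w\times 2n$ rectangle with probability at least $c^{n/w}$ uniformly in $\beta$. Zero boundary data is the least favourable one for increasing events of $|h|$, and in the localised regime long non-zero crossings are exponentially unlikely at a $\beta$-dependent rate. Nor is this what the pushing lemma provides: in Section~\ref{sec:reverseinequality} it is applied to vertical crossings by the \emph{zeros}.

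With the correct definition, $\{\calE_0(\LINE(y_i))\subset\STRIP(y_i)\}$ asks that the non-zero clusters touching $\LINE(y_i)$ be blocked inside the strip. In other words, $\{h=0\}$ must separate the line from the complement of the strip on both sides, essentially by crossing each half of $\STRIP(y_i)$ from top to bottom. This event is \emph{decreasing} in $|h|$, so zero boundary data is the favourable one for it. The paper takes the single-strip bound $\cfav^{100ns}$ directly from \cite[Lemma~7.2]{Lammers_2023_DichotomyTheoryHeight}, uniformly in $\beta$. It then multiplies over the roughly $2ns$ strips using the FKG inequality for $|h|$, since decreasing events are positively correlated as well.

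Your architecture (per-strip events, FKG for $|h|$, counting strips) therefore matches the paper's, but the crossings must be by zeros rather than non-zeros. Note that your domain-reduction remark would then point the wrong way. For a decreasing event of $|h|$, passing to a subdomain with zero boundary data can only increase the probability. So the lower bound has to be established in $\Gamma$ itself, as the cited lemma does.
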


\begin{proof}
    It was proved in~\cite[Lemma~7.2]{Lammers_2023_DichotomyTheoryHeight} that there
    exists a universal constant $\cfav\in(0,1)$ such that
    \begin{equation}
        \tilde\mu^0_{\Gamma,\beta}[\{
        \calE_0(\LINE(y_i))\subset \STRIP(y_i)\}]
        \geq (\cfav)^{100ns}.
    \end{equation}
    The percolation $\calE_0(\LINE_y)$ is a decreasing function
    of the absolute-value height function $|h|$.
    This percolation satisfies the FKG inequality~\cite[Lemma~2.15 and
    Section~12]{Lammers_2023_DichotomyTheoryHeight}.
    This yields
    \begin{equation}
        \tilde\mu^0_{\Gamma,\beta}[\cap_i\{
        \calE_0(\LINE(y_i))\subset \STRIP(y_i)\}]
        \geq (\cfav)^{1000(ns)^2}.
    \end{equation}
    This is the desired inequality.
\end{proof}

\subsection{Combining the different estimates}
\label{subsec:the_end}

\begin{proof}[Proof of Equation~\ref{eq:multi_point_ginibre} with $f(n,s)=c^{n^2s^2}$]
    Applying Lemma~\ref{lem:reverse_exploration_bound} with $\Gamma=\Gamma_n$ and then
    Lemma~\ref{lem:path_existence} gives
    \begin{equation}
        \Xi'(\Gamma_n)
        \geq
        (\cfav)^{1000(ns)^2}\Xi'(\Gamma_n\setminus\STRIP_y).
    \end{equation}
    By the definition of $\Xi'$ and the Kadanoff--Ceva correspondence,
    \begin{equation}
        \Xi'(\Gamma_n)
        =
        \Big\langle\Big[\prod\nolimits_{u\in
        T_n(s)}\bar\sigma_u\Big]\Big[\prod\nolimits_{v\in
        B_n(s)}\sigma_v\Big]\Big\rangle_{\Lambda_n,\beta},
    \end{equation}
    while the cable domain associated with $\Lambda'_n(s)$ is contained in
    $\Gamma_n\setminus\STRIP_y$. The latter domain is decomposed into components
    containing at most one source and one sink, so Remark~\ref{rem:ginibre_applies}
    and the Ginibre inequality imply
    \begin{equation}
        \Xi'(\Gamma_n\setminus\STRIP_y)
        \geq
        \Big\langle\Big[\prod\nolimits_{u\in
        T_n(s)}\bar\sigma_u\Big]\Big[\prod\nolimits_{v\in
        B_n(s)}\sigma_v\Big]\Big\rangle_{\Lambda'_n(s),\beta}.
    \end{equation}
    Setting $c:=\cfav^{1000}$ and combining the above gives
    Equation~\eqref{eq:multi_point_ginibre} with $f(n,s)=c^{n^2s^2}$.
\end{proof}

\subsection*{Acknowledgements}
\addcontentsline{toc}{section}{Acknowledgements}
The authors thank Thierry Bodineau for several discussions regarding the ideas in this article.
This research was supported by the French National Research Agency
(ANR), project number ANR-23-CPJ1-0150-01.

\printbibliography
\end{document}